\newtheorem{theorem}{Theorem}[section]
\newtheorem{lemma}[theorem]{Lemma}
\newtheorem{corollary}[theorem]{Corollary}
\newtheorem{proposition}[theorem]{Proposition}
\theoremstyle{definition}
\newtheorem{definition}[theorem]{Definition}
\newtheorem{example}[theorem]{Example}
\numberwithin{equation}{section}
\numberwithin{equation}{section}
\begin{document}

\title[Prime subcomplexes]{Prime subcomplexes}

\author[I. Akray]{Ismael Akray}
\address{Mathematics Department, Faculty of Science\\
	Soran University\\ 44008, Soran, Erbil
	Kurdistan Region, Iraq}

\email {akray.ismael@gmail.com, ismael.akray@soran.edu.iq}

\subjclass[2000]{16E05, 16P70, 11R44}
\keywords{complex; prime submodule; prime subcomplex}

\begin{abstract}

In 1859, Ernst E. Kummer \cite{erns} introduced the concept prime ideal and after that in 1983 this concept was generalized to prime submodules by R. McCasland \cite{mcca}. In this article, by considering complexes as a generalization of modules, I introduce the concept prime subcomplex and study some properties analogous to that of prime submodules.

\end{abstract}

\maketitle

\section{Introduction}

 Throughout this article all rings are assumed to be commutative with identity and all modules will be unitary. The symbol $\sqrt{P}$ and $Ann(P)$ indicates to the radical and the annihilator of $P$.  E. E. Kummer \cite{erns} introduced the notion prime ideal and then many authors discussed their properties like Krull, Fitting and McCoy in \cite{krul, fitt, mcco}. A proper ideal $P$ of a ring $R$ is said to be prime if $ab \in P$ for $a,b \in R$ implies $a \in P$ or $b \in P$. In other words, an ideal $P$ is prime if and only if $R/P$ is an integral domain. E. Noether \cite{noet} introduced primary ideal in 1921, where a proper ideal $P$ of a ring $R$ is said to be primary if $ab \in P$ for $a,b \in R$ implies $a \in P$ or $b \in \sqrt{P}$. R. McCasland \cite{mcca} in 1983 generalized this concept into module theory and introduced prime submodules where a proper submodule $N$ of an $R-$module $M$ is prime if $rm \in N$ for $r \in R$ and $m \in M$ implies $m \in N$ or $r \in (N:_R M) = Ann_R(M/N)$. Equivalently, $N$ is prime submodule if it is a primary submodule whose radical is identical with $(N:M)$, where a proper submodule $K$ of $M$ is primary (see \cite{noet2}) if $rm \in K$ for $r \in R$ and $m \in M$ implies $m \in K$ or $r \in \sqrt{(N:M)}$. In the theory of rings, prime ideals plays an important role everywhere and so does prime submodules in the theory of modules. In particular, primary submodules are the major characters in Noetherian modules.

\vspace{.3cm}
 H. Cartan \cite{cart} introduced the notion $R-$complex in a siminar in 1959. An $R-$complex $C=(C, d)_{i \in I}$ is a sequence

{$$  \cdots \stackrel{d_{n+2}}\rightarrow C_{n+1} \stackrel{d_{n+1}}\rightarrow C_n \stackrel{d_n}\rightarrow C_{n-1} \stackrel{d_{n-1}}\rightarrow \cdots $$}

 of $R-$modules $C_n$ and $R-$morphisms $d_n$ such that $d_n d_{n+1}=0$ for all $n \in I$ \cite{rotm}.
 Also, a complex $S=(S_i, d_i)$ is called a subcomplex of an $R-$complex $C=(C_i,d_i)$ briefly $S \leq C$ if $S_i \subseteq C_i$ and $d_{i+1}(S_{i+1}) \subseteq S_i$ for all $i \in I$. Furthermore, if $S_i \neq C_i$ for some $i \in I$, then $S$ is called a proper subcomplex of $C$. We denote the set of all indices $i \in I$ in which $S_i \neq C_i$ by $P_I$. Define the set of zero divisors $Z_R (C)$ (briefly $Z (C)$) on an $R-$complex $C$ to be the intersection of all zero divisors sets on the components of $C$, that is, $Z_R(C)= \cap_{i\in I} Z_R(C_i) = \{r \in R: rm_i=0, \hspace{.2cm} \text{for some} \hspace{.2cm} m_i \in C_i, \forall i \in I \}$. Moreover, the annihilator of an $R-$complex $C$ is $Ann_R(C)=\{r \in R : rC_i=0, \hspace{.2cm} \text{for all} \hspace{.2cm}  i\in I \}$. So, the zero subcomplex of an $R-$complex $C$ is prime subcomplex if $Ann(C)=Z(C)$. Furthermore, we define the residual of a subcomplex $S$ of a complex $C$ by $(S:C) = \cap_{i\in I} (S_i:C_i)= \{r \in R: rC_i \subseteq S_i, \forall i \in I \}$. Every $R-$module $M$ can be considered as a complex with $C_0=M$ and $C_i=0$ for all $(i \neq 0) \in I$ and all morphisms be zero. This motivate us to think about how can we introduce or generalize many notions in ring theory and module theory into $R-$complexes.

\vspace{.3cm}

 The purpose of this paper is to introduce and investigate the notion of primeness in a more wide category, the category of $R-$complexes. A proper subcomplex $S$ of an $R-$complex $C$ is said to be a prime subcomplex if $S_i$ is a prime submodule of $C_i$ for all $i \in P_I$. In other sense, A proper subcomplex $S$ of a complex $C$ is prime if for each $i \in I$, either $\frac{C_i}{S_i}$ is identically zero or a torsion-free $\frac{R}{P_i}$-module where $P_i= (S_i:C_i)$. In such case, we call $S$ a $P_i-$prime subcomplex. A proper subcomplex $S$ of an $R-$complex $C$ is primary if $S_i$ is primary for all $i \in P_I$, that is, for all $r \in R$, $i \in P_I$ and $m \in C_i$ with $rm \in S_i$ implies $m \in S_i$ or $r \in \sqrt{(S_i:C_i)}$. Equivalently, the zero-divisors of $\frac{C_i}{S_i}$ are nilpotent $\forall i \in P_I$. In this case we say $S$ is $P_i-$primary where $P_i= \sqrt{(S_i:C_i)}$. It is clear that a proper subcomplex $S$ of an $R-$complex $C$ is prime if and only if the chain map $f=<f_i>: \frac{C}{S} \rightarrow \frac{C}{S}$ defined by $f_i(m+S_i)= rm+S_i$, for $m \in C_i$ is either monic or nilpotent for all nonzero $r \in R$.

\vspace{.3cm}

 In section two, we establish the most fundamental theorems of prime submodules to prime subcomplexes. We first point out that the notion of prime subcomplex have many equivalents (Theorem \ref{equi}). We show that a subcomplex $S$ of an $R-$complex $C$ is prime if $(S_i:C_i)$ is a maximal ideal of $R$ for all $i \in P_I$ (Theorem \ref{p2}). Also, for any faithfully flat $R-$complex $F$, a subcomplex $S$ of a complex $C$ is prime if and only if $F \otimes S$ is a prime subcomplex of $F \otimes C$ (Theorem \ref{ff}). Finally, the prime avoidance theorem was generalized to prime subcomplexes (Theorem \ref{pri}).

\vspace{.3cm}

\section{Prime subcomplexes}
\hskip 0.6cm

In this section we introduce prime subcomplex and investigate some properties and results on such concept. We begin with their definition.
\begin{definition}
	 A proper subcomplex $S$ of an $R-$complex $C$ is said to be a prime subcomplex if $S_i$ is a prime submodule of $C_i$ for all $i \in P_I$.
\end{definition}

Let $\mathbb{N}$ be the set of natural numbers and $\mathbb{N}_0 = \mathbb{N} \cup \{0\}$. It is well-known that for any non-nilpotent element $a \in R$, the localization of a ring $R$ at the multiplicative subset $S= \{a^n: n \in \mathbb{N}_0 \}$ is $R_{(a)} =\{ \frac{r}{a^n} : r \in R, n \in \mathbb{N}_0 \} = R[\frac{1}{a}] = R[a, \frac{1}{a}]$, which is also called the Laurent polynomial ring. This localized ring has a principal prime ideal $<\frac{1}{a}> = \{ \frac{r}{a^n} : r \in R, n \in \mathbb{N}, a \nmid r \}$ and a primary ideal $<\frac{1}{a^2}>$. We denote such ideals by $P_a(R)$ and $(\frac{1}{a})  P_a(R)$ respectively. Now, we give an example on prime and primary subcomplexes in $\breve{C}$ech complexes \cite[5.1.6 Example, pp. 84]{brod}.


\begin{example}
	The $\breve{C}$ech complex of the ring of integers $\mathbb{Z}$ as a module over itself with respect to integers $3$, $5$ and $7$ is
	$$C^{\bullet}: 0 \to \mathbb{Z} \stackrel{d^0}\to \mathbb{Z}_{(3)} \oplus \mathbb{Z}_{(5)} \oplus \mathbb{Z}_{(7)} \stackrel{d^1}\to \mathbb{Z}_{(35)} \oplus \mathbb{Z}_{(21)} \oplus \mathbb{Z}_{(15)} \stackrel{d^2}\to \mathbb{Z}_{(105)} \to 0 $$
	
	where
	$$ d^0(n)= (\frac{n}{1}, \frac{n}{1}, \frac{n}{1})$$
	
	$$d^1((\frac{n_1}{3^{m_1}}, \frac{n_2}{5^{m_2}}, \frac{n_3}{7^{m_3}})) =	(\frac{5^{m_3}n_3}{35^{m_3}} - \frac{7^{m_2}n_2}{35^{m_2}}, \frac{3^{m_3}n_3}{21^{m_3}} - \frac{7^{m_1}n_1}{21^{m_1}},   \frac{3^{m_2}n_2}{15^{m_2}} - \frac{5^{m_1}n_1}{15^{m_1}}   ) $$
	
	$$ d^2 ((\frac{n_1}{35^{m_1}}, \frac{n_2}{21^{m_2}}, \frac{n_3}{15^{m_3}})) =  \frac{7^{m_3}n_3}{105^{m_3}} - \frac{5^{m_2}n_2}{105^{m_2}} + \frac{3^{m_1}n_1}{105^{m_1}}  $$
Then the subcomplex

$$S: 0 \to 0 \stackrel{\bar{d^0}}\to P_{3} \oplus \mathbb{Z}_{(5)} \oplus \mathbb{Z}_{(7)} \stackrel{\bar{d^1}}\to \mathbb{Z}_{(35)} \oplus \mathbb{Z}_{(21)} \oplus \mathbb{Z}_{(15)} \stackrel{\bar{d^2}}\to \mathbb{Z}_{(105)} \to 0$$	
	is a prime subcomplex of $C^{\bullet}$ while the subcomplex
	$$ T:  0 \to 0 \stackrel{\bar{d^0}}\to (\frac{1}{3})P_{3} \oplus \mathbb{Z}_{(5)} \oplus \mathbb{Z}_{(7)}  \stackrel{\bar{d^1}}\to \mathbb{Z}_{(35)} \oplus \mathbb{Z}_{(21)} \oplus \mathbb{Z}_{(15)} \stackrel{\bar{d^2}}\to \mathbb{Z}_{(105)} \to 0$$
	is a primary subcomplex of $C^{\bullet}$ not a prime.
\end{example}

\vspace{.3cm}

A proper subcomplex $S= (S_i, d_i)$ of a complex $C=(C_i, d_i)$ is said to be a direct summand of $C$ if either $S_i=C_i$ or $S_i$ is a direct summand of the $R-$module $C_i$ and denoted by $S \leq _{\oplus} C$. The torsion subcomplex $S=T(C)$ of an $R-$-complex $C$ is a subcomplex $S=\{S_i\}_{i\in I}$, where $S_i =T(C_i)$ is the torsion submodule of $C_i$, for all $i \in I$. We say that a complex $C$ is torsion if $C=T(C)$ and torsion-free if $T(C)=0$ is a zero complex.

\begin{proposition}
	Every direct summand subcomplex $S$ of a torsion-free complex $C$ is prime.

\end{proposition}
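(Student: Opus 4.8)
The plan is to reduce the statement, component by component, to the corresponding fact about modules: namely that a direct summand of a torsion-free module over an integral domain... no, wait — the ambient ring $R$ need not be a domain. So first I would recall the module-theoretic fact that underlies the claim: if $C_i$ is a torsion-free $R$-module and $S_i$ is a direct summand with complement $C_i = S_i \oplus T_i$, then $C_i/S_i \cong T_i$ is again torsion-free, and hence the zero submodule of $C_i/S_i$ is a prime submodule precisely when $C_i/S_i$ is a torsion-free $R/\mathrm{Ann}(C_i/S_i)$-module; since $T_i$ is torsion-free as an $R$-module, it is torsion-free over $R/P_i$ for $P_i=(S_i:C_i)$, so $S_i$ is a prime submodule of $C_i$. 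This is exactly the characterization of prime subcomplex recalled in the introduction ("either $C_i/S_i$ is identically zero or a torsion-free $R/P_i$-module").

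With that in hand, the proof of the proposition is short. Let $S \leq_\oplus C$ with $C$ torsion-free, and fix $i \in P_I$, so $S_i \neq C_i$ and, by definition of direct summand subcomplex, $S_i$ is a direct summand of the $R$-module $C_i$. Write $C_i = S_i \oplus T_i$ for some submodule $T_i \neq 0$. Since $C$ is torsion-free, $C_i = T(C_i)^c$... more precisely $T(C_i) = 0$, so every submodule of $C_i$ is torsion-free; in particular $T_i$ is torsion-free, and $C_i/S_i \cong T_i$ is a nonzero torsion-free $R$-module. Next I would check that a torsion-free $R$-module is automatically torsion-free as a module over $R/P_i$ where $P_i = \mathrm{Ann}_R(C_i/S_i)$: if $(r+P_i)(x) = 0$ with $x \in C_i/S_i$ nonzero and $r \notin P_i$, then $rx = 0$ in a torsion-free module forces $r$ to be a zero-divisor on $C_i/S_i$ — but torsion-freeness of $C_i/S_i$ over $R$ says the only such $r$ with $rx=0$, $x\neq 0$ is $r=0 \in P_i$, a contradiction. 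Hence $S_i$ is a $P_i$-prime submodule of $C_i$. Since this holds for every $i \in P_I$, the subcomplex $S$ is prime by definition.

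The only point requiring a little care — and the step I would flag as the main obstacle, modest as it is — is the interaction between "torsion-free over $R$" and "torsion-free over $R/P_i$" in the presence of a non-domain base ring $R$: one must be sure the definition of torsion-free module being used ("$T(M)=0$", i.e. no nonzero element is killed by a nonzero ring element) is the one that transfers correctly, rather than the stronger "flat" notion or the "no element killed by a non-zero-divisor" notion, which behave differently over non-domains. I would also note explicitly that when $S_i = C_i$ (i.e. $i \notin P_I$) there is nothing to prove, so the hypothesis "$S$ proper" only guarantees $P_I \neq \emptyset$ and the argument above covers exactly the indices that matter. Assembling these observations gives the result with essentially no computation.
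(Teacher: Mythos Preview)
Your proposal is correct and follows essentially the same idea as the paper: decompose $C_i = S_i \oplus T_i$, observe that $C_i/S_i \cong T_i$ is torsion-free, and conclude primeness. The only cosmetic difference is that the paper verifies the definition of prime submodule by a direct element chase (if $rm \in S_j$ with $m = a+b$, then $rb \in S_j \cap T_j = 0$, so $r=0$ or $b=0$), whereas you route the same computation through the quotient characterization and the passage to $R/P_i$; under the ``$rx=0,\ x\neq 0 \Rightarrow r=0$'' reading of torsion-freeness that both you and the paper use, $P_i = 0$ anyway, so your extra step is harmless but unnecessary.
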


\begin{proof}
	Assume $S=(S_i,d_i)$ is a direct summand of a torsion-free complex $C=(C_i, d_i)$. Hence there exists a direct summand subcomplex $T$ such that $C=S \oplus T$, that means $C_i= S_i \oplus T_i$ for all $i$. Suppose $rm \in S_j$ for all $j \in P_I$, $r \in R$ and $m \in C_j= S_j \oplus T_j$. So, $m=a+b$ for $a \in S_j$, $b \in T_j$ and $rm - ra=rb \in S_j$ but $rb \in T_j$, hence $rb=0$ and torsion-freeness of $M_j$ gives us either $r=0$ or $b=0$. If $r=0$, then $rM_j=0 \subseteq S_j$, while $b=0$ implies that $m=a+0 \in S_j$. Therefore $S$ is a prime subcomplex of $C$.
\end{proof}

A subcomplex $S$ of a complex $C$ is pure if $rC_j \cap S_j= rS_j$, for $r \in R$ and $j\in P_I$. Now, we give a connection between primeness and pureness of subcomplexes in torsion-free complexes.

\begin{proposition}
	 A proper subcomplex $S$ of a torsion-free $R-$complex $C$ is prime with $(S_i:C_i)=0$, for all $i \in P_I$ if and only if $S$ is pure subcomplex of $C$.
\end{proposition}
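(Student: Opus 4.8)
The plan is to use that both conditions involved — being a prime submodule and being a pure submodule — are imposed component-by-component only on the indices in $P_I$, so the differentials of $C$ are irrelevant and it suffices to argue at a single fixed $j \in P_I$. First I would record the trivial half of purity: since $rS_j \subseteq rC_j$ and $rS_j \subseteq S_j$, the inclusion $rS_j \subseteq rC_j \cap S_j$ always holds, so purity at $j$ amounts to the reverse inclusion $rC_j \cap S_j \subseteq rS_j$ for every $r \in R$.

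For the forward direction I would assume $S_j$ is prime with $(S_j:C_j)=0$ for every $j \in P_I$, fix such a $j$, an $r \in R$, and an $x \in rC_j \cap S_j$, and write $x = rc$ with $c \in C_j$. Then $rc = x \in S_j$, and since $S_j$ is a prime submodule, either $c \in S_j$, so $x = rc \in rS_j$, or $r \in (S_j:C_j) = 0$, so $x = 0 \in rS_j$; either way the needed inclusion holds and $S$ is pure.

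For the converse I would assume $S$ is pure, fix $j \in P_I$, and use that $S_j \subsetneq C_j$ to choose $c_0 \in C_j \setminus S_j$. To obtain $(S_j:C_j)=0$: for $r \in (S_j:C_j)$ we have $rc_0 \in rC_j \cap S_j = rS_j$, say $rc_0 = rs_0$ with $s_0 \in S_j$, so $r(c_0 - s_0) = 0$ with $c_0 - s_0 \neq 0$, and torsion-freeness of $C_j$ forces $r = 0$. To obtain primeness of $S_j$: if $rc \in S_j$ with $c \in C_j$, then $rc \in rC_j \cap S_j = rS_j$, say $rc = rs$ with $s \in S_j$, hence $r(c-s) = 0$; torsion-freeness then yields $c = s \in S_j$ or $r = 0 \in (S_j:C_j)$, which is exactly the defining implication for a prime submodule. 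Thus $S$ is a prime subcomplex of $C$ with $(S_j:C_j)=0$ for all $j\in P_I$.

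I do not anticipate a genuine obstacle here: the only delicate points are bookkeeping — keeping the case $r=0$ consistent with the definition of prime submodule (there it simply lands in $(S_j:C_j)$), and remembering that $S_j$ is a \emph{proper} submodule for $j\in P_I$, which is exactly what produces the witness $c_0 \notin S_j$ on which torsion-freeness can act to kill $r$. Structurally this is the same componentwise computation already used in the two preceding propositions.
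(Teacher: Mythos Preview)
Your proposal is correct and follows essentially the same componentwise argument as the paper: in both directions one reduces to a fixed $j\in P_I$, uses purity to write $rm=rn$ with $n\in S_j$, and lets torsion-freeness force the dichotomy $r=0$ or $m\in S_j$. Your write-up is in fact more complete than the paper's, since you explicitly verify $(S_j:C_j)=0$ in the converse direction, whereas the paper leaves this implicit in the dichotomy ``$r=0$ or $m=n$''.
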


\begin{proof}
	Let $S$ be a prime subcomplex of $C$ with $(S_i : C_i)=0$, for all $i\in P_I$. Take $r \in R$ and $m\in C_j$ for $j\in P_I$. Then primeness of $S_j$ implies $r \in (S_j:C_j)=0$ or $m \in S_j$. In both cases we will have $rS_j =S_j \cap rC_j$ and $S_j$ is pure submodule of $C_j$. So $S$ is pure subcomplex of $C$. Now, suppose $S$ is a pure subcomplex of $C$ and $rm \in S_j$ for $r \in R$ and $m\in C_j$ for all $j \in P_I$. From the identity $rC_j \cap S_j= rS_j$, we obtain $rm \in rS_j$, so there exists $n \in S_j$ with $rm=rn$ and that $r(m-n)=0$. Torsion-freeness of $C$ implies either $r=0$ or $m=n$, that is, $rC_j \subseteq S_j$ or $m \in S_j$. Thus $S_j$ is a prime submodule of $C_j$ and that $S$ is prime subcomplex of $C$.
\end{proof}

\begin{proposition}
	Suppose that $R$ is an integral domain and $C$ is an $R-$complex with $T(C) \neq C$. Then $T(C)$ is a prime subcomplex of $C$.
\end{proposition}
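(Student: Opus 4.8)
The plan is to verify three things in turn: that $T(C)$ is genuinely a subcomplex of $C$, that it is proper, and that each of its components indexed by $P_I$ is a prime submodule; the last of these is the substantive point.

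First I would check the subcomplex condition $d_{i+1}(T(C_{i+1})) \subseteq T(C_i)$. Fix $i$ and $m \in T(C_{i+1})$, and choose $0 \neq r \in R$ with $rm = 0$. Then $r\,d_{i+1}(m) = d_{i+1}(rm) = 0$, and since $R$ is an integral domain $r$ stays a nonzerodivisor, so $d_{i+1}(m) \in T(C_i)$. Hence $T(C)$ is a subcomplex. Properness is then immediate: the hypothesis $T(C) \neq C$ says precisely that $T(C_i) \neq C_i$ for some $i$, i.e. $P_I \neq \emptyset$, and for such $i$ the submodule $T(C_i)$ is proper in $C_i$.

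The core step is to show $T(C_i)$ is a prime submodule of $C_i$ for every $i \in P_I$. Take $r \in R$ and $m \in C_i$ with $rm \in T(C_i)$. If $r = 0$, then $rC_i = 0 \subseteq T(C_i)$, so $r \in (T(C_i):C_i)$ and we are done. If $r \neq 0$, then $rm \in T(C_i)$ yields some $0 \neq s \in R$ with $(sr)m = s(rm) = 0$; as $R$ is a domain, $sr \neq 0$, forcing $m \in T(C_i)$. This is exactly the defining property of a prime submodule, so each $T(C_i)$ with $i \in P_I$ is prime, and therefore $T(C)$ is a prime subcomplex of $C$. As a byproduct one obtains $(T(C_i):C_i) = 0$ for $i \in P_I$, consistent with $0$ being a prime ideal of the domain $R$.

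I do not anticipate a serious obstacle; the only points needing care are (i) confirming $T(C)$ satisfies $d_{i+1}(S_{i+1}) \subseteq S_i$ — where the integral-domain hypothesis enters the first time — and (ii) the routine bookkeeping that the torsion component $T(C_i)$ is automatically proper exactly on the index set $P_I$, so that the definition of prime subcomplex is tested only where it applies.
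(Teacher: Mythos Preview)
Your proof is correct and follows essentially the same route as the paper: the heart of both arguments is that if $rm \in T(C_j)$ with $r \neq 0$, then some nonzero $s$ annihilates $rm$, and the domain hypothesis forces $sr \neq 0$, so $m \in T(C_j)$. You are simply more thorough than the paper, which omits the verification that $d_{i+1}(T(C_{i+1})) \subseteq T(C_i)$ and the trivial $r=0$ case.
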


\begin{proof}
	Suppose $rm \in T(C_j)$ for nonzero $r \in R$ and $m \in C_j$ for $j \in I$ such that $T(C_j) \neq C_j$. As $T(C_j)$ is a torsion submodule of $C_j$ and $R$ is an integral domain, there exists a nonzero $s$ in $R$ such that $srm =0$ with $sr\neq 0$. Hence $m \in T(C_j)$ and the result follows.
\end{proof}

Let $f: R \rightarrow S$ be a ring homomorphism and $C$ be an $S-$complex. Then one can consider $C$ as an $R-$complex and the inverse image of each prime subcomplex of an $S-$complex $C$ is a prime subcomplex of $C$ as $R-$complex. For a primary subcomplex $S$ of $C$, $S$ is prime if and only if all proper ideals $(S_i:C_i)$ are prime $\forall i \in P_I$. Hence we have the following result.

\begin{proposition}
	Suppose $S$ is $P_i-$primary subcomplex of $C$ containing a $P_i-$prime subcomplex of $C$. Then $S$ is a $P_i-$prime.
\end{proposition}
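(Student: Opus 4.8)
The plan is to reduce the assertion to a statement about the residual ideals $(S_i:C_i)$ and then invoke the observation recorded just before the proposition: a primary subcomplex is prime precisely when all of its proper residuals $(S_i:C_i)$ are prime ideals.

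First I would fix notation. Let $N \le S$ be the given $P_i$-prime subcomplex of $C$, so that $(N_i:C_i)=P_i$ at every index where $N_i \ne C_i$, and recall that $S$ being $P_i$-primary means $\sqrt{(S_i:C_i)}=P_i$ for every $i \in P_I$. Fix $i \in P_I$. Since $S_i \subsetneq C_i$ and $N_i \subseteq S_i$, we have $N_i \subsetneq C_i$ as well, so the hypothesis on $N$ applies at this index and $(N_i:C_i)=P_i$. This little bit of index bookkeeping — checking that the indices at which $S$ is proper are among those at which the containing prime subcomplex is proper — is the only point that needs attention, and it is immediate from $N_i \subseteq S_i \subsetneq C_i$.

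The key step is then the chain of inclusions
\[
P_i \;=\; (N_i:C_i) \;\subseteq\; (S_i:C_i) \;\subseteq\; \sqrt{(S_i:C_i)} \;=\; P_i ,
\]
in which the first inclusion is monotonicity of the residual applied to $N_i \subseteq S_i$, the second is the general inclusion of an ideal in its radical, and the last equality is the primary hypothesis. Hence $(S_i:C_i)=P_i$ is a prime ideal of $R$. As $i \in P_I$ was arbitrary, all proper residuals of $S$ are prime, so by the observation quoted above $S$ is a prime subcomplex, and the common value $(S_i:C_i)=P_i$ identifies it as $P_i$-prime.

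If one prefers to bypass that observation, the primeness of each $S_j$ can be checked directly from the definitions: given $rm \in S_j$ with $r \in R$, $m \in C_j$ and $j \in P_I$, primariness of $S_j$ forces either $m \in S_j$ or $r \in \sqrt{(S_j:C_j)}=P_j=(N_j:C_j)\subseteq(S_j:C_j)$, i.e. $rC_j \subseteq S_j$; in either case the defining condition for a prime submodule holds. I do not anticipate any real obstacle in the argument beyond the index-set check mentioned above.
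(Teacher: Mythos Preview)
Your proof is correct and follows essentially the same route as the paper: both arguments reduce to showing that $(S_i:C_i)$ is the prime ideal $P_i$ for each $i\in P_I$ and then invoke the observation preceding the proposition. The paper verifies primeness of $(S_i:C_i)$ by an element-level check (taking $ab\in(S_i:C_i)$ and using primariness together with $\sqrt{(S_i:C_i)}=(T_i:C_i)\subseteq(S_i:C_i)$), whereas your squeeze $P_i=(N_i:C_i)\subseteq(S_i:C_i)\subseteq\sqrt{(S_i:C_i)}=P_i$ packages the same containments more directly; the content is the same.
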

\begin{proof}
	Consider a $P_i=(T_i:C_i)-$prime subcomplex $T$ of $C$ with $T \subset S$ and $S$ a $P_i=\sqrt{(S_i:C_i)}-$primary subcomplex. To prove $S$ is $P_i-$prime, we have to prove $(S_i:C_i)$ is prime foe all $i \in P_I$. For this purpose, let $abm \in S_i$, for all $m \in C_i$, $a,b \in R$ and $i \in P_I$. The primaryness of $S$ gives us $a \in \sqrt{(S_i:C_i)} $ or $b \in (S_i:C_i)$. If $a \in \sqrt{(S_i:C_i)}= (T_i:C_i)$, then $aC_i \subseteq T_i \subseteq S_i$ and so $a \in (S_i:C_i)$.
\end{proof}

Let $P$ be a prime ideal of $R$ and $S$ a subcomplex of $C$. The localization of $S$ at $P$ is $S_P=((S_i)_P, \bar{d}_i)$ and the saturation of $S$ at $P$ is the subcomplex $\c{S}_P (S)= (f^{-1} ((S_i)_P), \bar{d}_i)$, where $f=<f_i>$ is the natural chain map defined by $f_i:S_i \rightarrow (S_i)_P$, $f_i(x)=\frac{x}{1}$, for $x \in S_i$.

\begin{proposition}\label{p1}
	Suppose $S$ is a proper subcomplex of an $R-$complex $C$ such that $(S_i:C_i)=P_i$ is a prime ideal of $R$ for all $i \in P_I$ and set $P= <P_i>$. Then the saturation $\c{S}_P (S)= f^{-1}(S_P)$ of $S$ is a $P_i-$prime, where $$(S_P)_i= \begin{cases}
            {S_i}_{P_i}, & \mbox{if } P_i \neq R \\
            (C_i)_{P_i}, & \mbox{otherwise}.
          \end{cases}$$

and $f$ is the chain map $f_i: C_i \rightarrow (C_i)_{P_i}$, $f_i(m)= \frac{m}{1}$.
	
\end{proposition}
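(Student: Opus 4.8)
The plan is to localize everything componentwise and reduce to the classical fact that, for a submodule $N$ of a module $M$ with $(N:M)$ a prime ideal $\mathfrak p$, the saturation $f^{-1}(N_{\mathfrak p})$ is a $\mathfrak p$-prime submodule of $M$; the unit-or-annihilator dichotomy underlying that fact is what I would actually prove. First I would do the bookkeeping: for $i\in P_I$ we have $S_i\subsetneq C_i$, so $P_i=(S_i:C_i)$ is a proper prime ideal and $(S_P)_i=(S_i)_{P_i}$, whereas for $i\notin P_I$ we have $S_i=C_i$, $P_i=R$, and the formula sets the saturation equal to $C_i$ there. Hence the only indices that matter are $i\in P_I$, where I write $S_i':=f_i^{-1}\bigl((S_i)_{P_i}\bigr)$ for the $i$-th term of $\c{S}_P(S)$. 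That $\c{S}_P(S)$ really is a subcomplex is exactly what the definition of saturation preceding the proposition asserts: since $d_i(S_i)\subseteq S_{i-1}$ and localization is functorial, the differentials of $C$ carry $S_i'$ into $S_{i-1}'$, so the induced maps $\bar d_i$ are well defined and $f=\langle f_i\rangle$ is a chain map onto the localized complex.

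The core step is the dichotomy, applied to each component. Fix $i\in P_I$ and suppose $rm\in S_i'$ for $r\in R$, $m\in C_i$, i.e.\ $\tfrac{rm}{1}\in(S_i)_{P_i}$. If $r\notin P_i$ then $\tfrac r1$ is a unit in $R_{P_i}$, so $\tfrac m1\in(S_i)_{P_i}$ and $m\in S_i'$. If instead $r\in P_i=(S_i:C_i)$ then $rC_i\subseteq S_i\subseteq S_i'$, so $r\in(S_i':C_i)$. Thus $S_i'$ is a prime submodule of $C_i$. To pin down its attached prime, $P_iC_i\subseteq S_i\subseteq S_i'$ gives $(S_i':C_i)\supseteq P_i$; conversely, if some $r\in(S_i':C_i)$ lay outside $P_i$, then $\tfrac r1$ being a unit would force $(C_i)_{P_i}=(S_i)_{P_i}$, hence $S_i'=C_i$ — excluded once properness is in hand. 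So $(S_i':C_i)=P_i$, and $\c{S}_P(S)$ is $P_i$-prime on each component indexed by $P_I$.

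What remains is to show $S_i'\subsetneq C_i$ for $i\in P_I$, and this is the step I expect to be the real obstacle, since it is the only place that uses the hypothesis $(S_i:C_i)=P_i$ ``on the nose''. I would argue $(C_i/S_i)_{P_i}\neq 0$: its annihilator is $(S_i:C_i)=P_i$, and when $C_i$ is finitely generated — the setting of the motivating examples, and a hypothesis one should keep in mind in general — the intersection of the annihilators of a finite generating set of $C_i/S_i$ equals $P_i$, so by primeness of $P_i$ one of those annihilators lies inside $P_i$, i.e.\ the corresponding generator survives in the localization; hence $(S_i)_{P_i}\subsetneq(C_i)_{P_i}$ and $S_i'=f_i^{-1}\bigl((S_i)_{P_i}\bigr)\subsetneq C_i$. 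Combining the three steps: $\c{S}_P(S)$ is a proper subcomplex whose every nontrivial component is a $P_i$-prime submodule, so it is a $P_i$-prime subcomplex of $C$. The only other point needing care is the phrase ``$f$ is a chain map'': because the terms $(C_i)_{P_i}$ are localized at possibly different primes, one must first fix the ambient (co)chain structure on the target before checking $\bar d_i\bar d_{i+1}=0$ and $\bar d_i f_i=f_{i-1}d_i$, but given $d_i(S_i)\subseteq S_{i-1}$ this is routine.
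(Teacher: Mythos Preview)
Your core argument---the unit dichotomy in $R_{P_i}$---is exactly the paper's: if $rm\in f_i^{-1}\bigl((S_i)_{P_i}\bigr)$ and $r\notin P_i$, then $r/1$ is a unit, so $m/1\in (S_i)_{P_i}$ and $m\in S_i'$. The paper's proof is that single observation and nothing more; it does not verify properness of the saturation, pin down $(S_i':C_i)=P_i$, or address the chain-map bookkeeping. On all three points you go further than the paper, and your instinct that properness is the real obstacle is correct: without a finiteness hypothesis it can genuinely fail (take $R=\mathbb{Z}$, $C_i=\mathbb{Q}/\mathbb{Z}$, $S_i=0$, so $(S_i:C_i)=(0)$ is prime but $(C_i)_{(0)}=0$ and the saturation is all of $C_i$), so your finitely-generated argument via $\prod_j\mathrm{Ann}(x_j)\subseteq\bigcap_j\mathrm{Ann}(x_j)=P_i$ patches a gap the paper leaves open.
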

\begin{proof}
	Let $rm \in f^{-1}({S_i}_{P_i})$, for $r \in R$, $m \in C_i$ and $i \in P_I$. Then $\frac{rm}{1} \in {S_i}_{P_i}$ and so $trm \in S_i$ for $t \notin P$. If $r \notin P_i$, then $f_i(m)=\frac{m}{1}=\frac{trm}{tr} \in {S_i}_{P_i}$ and $m \in f^{-1}_i ((S_i)_{P_i})$. Therefore, $\c{S}_P (S)$ is a $P_i-$prime subcomplex of $C$.
\end{proof}

A free complex of finite rank is a complex whose terms are free $R-$modules of finite rank.

\begin{theorem}
	Let $S$ be a subcomplex of a free complex $F = (F_i, d_i)$ of finite rank. Then $S$ is a $P_i-$prime if and only if $S= \c{S}_P (T)$, for some subcomplex $T= (N_i+P_iF_i, d_i)$ of $F$, where $N=f^{-1}(A)$ for a direct summand $A$ of $F_{P}= ((F_i)_{P_i}, \bar{d}_i)$ and the canonical chain map $f : F \rightarrow F_{P_i}$, $f_i(x)=\frac{x}{1}$, $\forall x \in F_i$.
\end{theorem}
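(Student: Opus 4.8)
The plan is to reduce the statement, degree by degree, to the known characterization of $P$-prime submodules of a finitely generated free module, and then to reassemble the pieces compatibly with the differential of $F$. Throughout write $P=\langle P_i\rangle$ and recall two facts about a submodule $S_i$ of a finitely generated free $R$-module $F_i$ with $(S_i:F_i)=P_i$ prime: by the torsion-free description of primeness used above, $S_i$ is $P_i$-prime iff $F_i/S_i$ is a torsion-free $R/P_i$-module; and in that case (i) $P_iF_i\subseteq S_i$, since $F_i/S_i$ is annihilated by $P_i$, and (ii) $S_i=f_i^{-1}\bigl((S_i)_{P_i}\bigr)$, since a nonzero torsion-free $R/P_i$-module embeds into its localization $(F_i/S_i)_{P_i}=(F_i)_{P_i}/(S_i)_{P_i}$, so $S_i$ is saturated at $P_i$. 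I will also use that $\bigl(f_i^{-1}(A_i)\bigr)_{P_i}=A_i$ for every $R_{P_i}$-submodule $A_i\subseteq(F_i)_{P_i}$, and that $(T_i:F_i)=Ann_R(F_i/T_i)$.

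$(\Rightarrow)$ Suppose $S$ is $P_i$-prime. For $i\in P_I$ set $\overline{S}_i:=(S_i)_{P_i}/P_i(F_i)_{P_i}$, which by (i) is a $\kappa(P_i)$-subspace of the finite-dimensional space $(F_i)_{P_i}/P_i(F_i)_{P_i}$ over $\kappa(P_i)=R_{P_i}/P_iR_{P_i}$, and which is proper since $S_i\subsetneq F_i$ and $S_i$ is saturated. Lifting a basis of $\overline{S}_i$ into $(S_i)_{P_i}$ and completing it to a basis of $(F_i)_{P_i}$ (Nakayama) yields a free direct summand $A_i\subseteq(S_i)_{P_i}$ of $(F_i)_{P_i}$ with $A_i+P_i(F_i)_{P_i}=(S_i)_{P_i}$; for $i\notin P_I$ take $A_i=(F_i)_{P_i}$. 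The crucial extra point (addressed below) is that these choices can be made coherently so that $A=(A_i,\bar d_i)$ is a direct-summand subcomplex $A\leq_\oplus F_P$. Granting this, set $N=f^{-1}(A)$ and $T=(N_i+P_iF_i,\,d_i)$. From $\bar d_{i+1}(A_{i+1})\subseteq A_i$ we get $d_{i+1}(N_{i+1})\subseteq N_i$, and $d_{i+1}(P_{i+1}F_{i+1})\subseteq P_iF_i$, so $T$ is a subcomplex of $F$; moreover $(T_i)_{P_i}=A_i+P_i(F_i)_{P_i}=(S_i)_{P_i}$, whence $(\c{S}_P(T))_i=f_i^{-1}\bigl((S_i)_{P_i}\bigr)=S_i$ by (ii). Thus $S=\c{S}_P(T)$ with $T$ of the required shape.

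$(\Leftarrow)$ Conversely, let $S=\c{S}_P(T)$ with $T=(N_i+P_iF_i,d_i)$, $N=f^{-1}(A)$, and each $A_i$ a direct summand of $(F_i)_{P_i}$. Then $(T_i)_{P_i}=(N_i)_{P_i}+P_i(F_i)_{P_i}=A_i+P_i(F_i)_{P_i}$, and since $A_i$ is a direct summand, $(F_i)_{P_i}/A_i$ is free, so $(F_i)_{P_i}/\bigl(A_i+P_i(F_i)_{P_i}\bigr)\cong\bigl((F_i)_{P_i}/A_i\bigr)\otimes_{R_{P_i}}\kappa(P_i)$ is a $\kappa(P_i)$-vector space. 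Now $S_i=(\c{S}_P(T))_i=f_i^{-1}\bigl(A_i+P_i(F_i)_{P_i}\bigr)$ embeds into this vector space, so $F_i/S_i$ is a torsion-free $R/P_i$-module; and for $i$ with $A_i\neq(F_i)_{P_i}$, equivalently $S_i\subsetneq F_i$ (so $i\in P_I$), we get $(S_i:F_i)=Ann_R(F_i/S_i)=P_i$, since a nonzero torsion-free module over the domain $R/P_i$ has trivial annihilator. Hence each such $S_i$ is a $P_i$-prime submodule of $F_i$, and $S$ is a $P_i$-prime subcomplex of $F$.

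The step I expect to cost the most work is the coherence claim in $(\Rightarrow)$: that the summands $A_i$ can be chosen simultaneously with $\bar d_{i+1}(A_{i+1})\subseteq A_i$ for all $i$, i.e.\ so that $A$ is a genuine subcomplex of $F_P$. The natural route is to reduce modulo $P$ first: the image $\overline{S}$ of $S_P$ in $F_P/PF_P$ is a subcomplex of a complex of $\kappa(P)$-vector spaces, hence a direct-summand subcomplex, so $F_P/PF_P=\overline{S}\oplus\overline{Q}$ with $\overline{Q}$ a subcomplex; one then lifts this splitting to $F_P$ degree by degree, at each stage correcting the chosen basis of $(F_i)_{P_i}$ by an element of $P_i(F_i)_{P_i}$ in order to annihilate the off-diagonal block of $\bar d_{i+1}$ (which already vanishes modulo $P_i$). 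Verifying that these successive corrections go through consistently is an obstruction-theoretic computation over the local ring $R_{P_i}$ and is the technical heart of the argument; everything else is routine bookkeeping with localization, Nakayama's lemma, and the two facts recalled at the outset.
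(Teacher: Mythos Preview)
Your overall approach is essentially the paper's: for $(\Rightarrow)$ both arguments pass to $F_P$, lift a $\kappa(P_i)$-basis of $(S_i)_{P_i}/P_i(F_i)_{P_i}$ via Nakayama to a free direct summand $A_i\subseteq(F_i)_{P_i}$ with $A_i+P_i(F_i)_{P_i}=(S_i)_{P_i}$, set $N_i=f_i^{-1}(A_i)$, and recover $S$ as $\c{S}_P(N+PF)$; for $(\Leftarrow)$ the paper simply invokes Proposition~\ref{p1}, whose content you unpack directly by exhibiting $F_i/S_i$ inside the $\kappa(P_i)$-vector space $(F_i)_{P_i}/(A_i+P_i(F_i)_{P_i})$. (Minor slip: ``$S_i$ embeds'' should read ``$F_i/S_i$ embeds''.) Where you go beyond the paper is in isolating the coherence question---whether the $A_i$ can be chosen so that $A=(A_i,\bar d_i)$ is a genuine \emph{subcomplex} of $F_P$. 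The paper's proof selects the $A_i$ independently in each degree and never checks $\bar d_{i+1}(A_{i+1})\subseteq A_i$, so on this point you are being more careful, not less.

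That said, your proposed resolution of the coherence question has a real gap. The assertion that a subcomplex of a complex of $\kappa(P)$-vector spaces is automatically a direct-summand subcomplex (i.e.\ admits a complementary \emph{subcomplex}) is false: in $0\to k\xrightarrow{\mathrm{id}}k\to 0$ the subcomplex $0\to 0\to k\to 0$ has no chain-map retraction, since any such retraction would force $\mathrm{id}_k=0$. Hence your ``split over the residue field, then lift'' programme fails at its first step, and the obstruction-theoretic correction you sketch never gets off the ground. Since the paper's own proof simply ignores this issue, neither argument as written establishes that $A$ can be chosen to be a subcomplex; whether the theorem holds exactly as stated, or only in the degree-wise sense in which the paper effectively works, is not settled by either proof.
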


\begin{proof}
	Clearly $(N_i+P_iF_i:F_i)=P_i$ and so by Proposition \ref{p1}, $\c{S}_P (T)$ is a $P_i-$prime subcomplex of $F$. Now, let $S$ be a $P_i-$prime subcomplex of $F$. So, $P_iF_i \subset S_i$ and we have the composition of chain maps $F \stackrel f\to F_{P} \stackrel g\to \dfrac{F_P}{PF_P}$. Choose a basis $\{ e_1, \cdots e_{v_i} \}$ for $g_i({S_i}_{P_i})$ and a set of elements $m_1, \cdots , m_{v_i} \in {S_i}_{P_i} \subseteq (F_i)_{P_i}$ such that $g_j(m_i) = e_i$ for $i=1, \cdots , v_j$ and $j \in I$. Then one can check that $(S_i)_{P_i}= (m_1, \cdots , m_{v_i})+P_i {F_i}_{P_i}$. Since $P_i (F_i)_{P_i} \subseteq (S_i)_{P_i}$, we have an exact sequence $0 \to P_i (F_i)_{P_i} \to S_i \to \frac{S_i}{P_i (F_i)_{P_i}} \to 0$. Thus Nakayama lemma ensure that $<m_1, \cdots , m_{v_i}>$ is a direct summand of ${F_i}_{P_i}$. Put $N_i=f^{-1}_i (<m_1, \cdots , m_{v_i}>)$ and $N=(N_i, d_i)$. So, we have $S_i \subset N_i + F_i \subset \c{S}_P (N +P_iF) $ and $\c{S}_P (C+P_iF)$ is a $P_i-$prime subcomplex of $F$. By \cite[Corollary 3]{bour}, there exists a bijection correspondence between the $P-$prime subcomplexes of $F$ and those of $F_{P_i}$. Since $S_P=C_P+P_i F_P$, we get that $S=\c{S}_P (C+P_iF)$.

\end{proof}

From \cite[Page 169, Ex. 12]{bour2}, every primary ideal in a von Neumann regular ring is a prime. So we have every primary subcomplex of a complex over von Neumann regular ring is prime.

\begin{theorem}\label{equi}
	Suppose $S$ is a proper subcomplex of an $R-$complex $C$ and $(S_i:C_i)=P_i$ for $i \in P_I$. Then the following assertions are equivalent:
	\begin{enumerate}
		\item $S$ is a prime subcomplex of $C$;
		\item every component $\frac{C_i}{S_i}$ of the quotient complex  $\frac{C}{S}$ is a torsion-free $\frac{R}{P_i}-$module;
		\item for each subcomplex $T$ of $C$ and a family of ideals $\{J_i\}_{i \in I}$ of $R$ with $J_i T_i \subseteq S_i$, $T_i \subseteq S_i$ or $J_i \subseteq (S_i:C_i)$, $\forall i \in P_I$;
		\item $(S_i:r)=S_i$ for all $r \in R - P_i$ and $ i \in P_I$;
		\item $(S_i: J)=S_i$, $\forall i \in P_I$ and any ideal $J$ of $R$;
		\item $(S_i:m)=P_i$, for all $m \in C_i-S_i$ and $ i \in P_I$;
		\item $(S_i:N_i)=P_i$, for all subcomplex $N=(N_i, d_i)$ of $C$ with $S\subset T$ and $ i \in P_I$;
		\item $Ass(\frac{C_i}{S_i}) = \{P_i\}$, $\forall i \in P_I$, where $Ass(\frac{C_i}{S_i})$ indicates the set of associative primes of $\frac{C_i}{S_i}$;
		\item $Z_R(\frac{C_i}{S_i})=P_i$, $\forall i \in P_I$;
		\item every nonzero subcomplex $\frac{T}{S}$ of $\frac{C}{S}$ has the same annihilator, that is, $(T:C)=(S:C)$;
		\item for any family of ideals $J=<J_i>$ of $R$ with $(S_i : C_i)\subsetneq J_i$ and all subcomplex $T$ of $C$ with $S_i \subsetneq T_i$, $J_i T_i \nsubseteq S_i$, $\forall i \in P_I$.
		
	\end{enumerate}
\end{theorem}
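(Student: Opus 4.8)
The plan is to reduce the entire statement to the classical characterization of prime submodules (compare \cite{mcca}), applied one coordinate at a time. Each of the eleven conditions is a property quantified over the indices $i\in P_I$, and for a fixed such $i$ the hypothesis $(S_i:C_i)=P_i$ together with $S_i\subsetneq C_i$ places us exactly in the module-theoretic setting: the ``$i$-th slice'' of each of (1)--(11) is one of the standard equivalent descriptions of ``$S_i$ is a prime submodule of $C_i$''. So it suffices to fix $i\in P_I$, prove these slices pairwise equivalent, and then intersect or quantify over $i$; note also that for $i\in P_I$ with $S_i$ prime one automatically has $P_i=(S_i:C_i)\neq R$, which is tacitly used in (4)--(11).

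First I would record the small lemma that makes the reduction legitimate: in the conditions quantifying over subcomplexes $T$ of $C$ --- namely (3), (7), (10), (11) --- an arbitrary submodule $K\subseteq C_i$ can be realized as the degree-$i$ component of a genuine subcomplex. Indeed, take $T_i=K$, $T_{i-1}=d_i(K)$ and all other components $0$; since $d\circ d=0$ one has $d_{i-1}(T_{i-1})=0$, so $T\leq C$, and the side conditions $J_jT_j\subseteq S_j$ hold vacuously for $j\neq i$. When the statement instead requires $S\subseteq T$, replace this by $T_i=S_i+K$, $T_{i-1}=S_{i-1}+d_i(K)$, $T_j=S_j$ otherwise. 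Consequently each ``subcomplex'' condition at index $i$ is equivalent to its plain module analogue for $C_i$.

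With the reduction in hand, the proof is the usual cycle of implications. The equivalence $(1)\Leftrightarrow(2)$ is the torsion-free reformulation already noted in the introduction: $rm\in S_i$, $m\notin S_i$ says $(r+P_i)(m+S_i)=0$ with $m+S_i\neq0$ in $C_i/S_i$, so (2) forces $r\in P_i=(S_i:C_i)$, and conversely. The same one-line computation gives $(2)\Rightarrow(3)$. Specializing (3) to $J_i=(r)$, $T_i=(S_i:r)$ gives (4), and to a general ideal gives (5), while (5)$\Rightarrow$(4) by $J=(r)$. From (4) one gets (6): for $m\notin S_i$ one always has $P_i\subseteq(S_i:m)$, and $r\in(S_i:m)\setminus P_i$ would force $m\in(S_i:r)=S_i$; then (7), (8), (9) follow from (6) by using $(S_i:N_i)\subseteq(S_i:m)$ for $m\in N_i\setminus S_i$, by identifying $Ass(C_i/S_i)$ with the set of ideals $(S_i:m)$, $m\notin S_i$, and by writing $Z_R(C_i/S_i)=\bigcup_{m\notin S_i}(S_i:m)$. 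Each of (6)--(9) returns to (1) at once: if $rm\in S_i$ and $m\notin S_i$ then $r\in(S_i:m)=P_i=(S_i:C_i)$, so $rC_i\subseteq S_i$.

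Finally I would thread (10) and (11) into the cycle, and this is where the only real care is required, because both involve the global residual $(T:C)=\bigcap_j(T_j:C_j)$ rather than a single coordinate. For $(1)\Rightarrow(10)$: if $S$ is prime and $T/S\neq0$, then for every $j$ either $T_j=S_j$, so $(T_j:C_j)=(S_j:C_j)$, or $S_j\subsetneq T_j$ with $j\in P_I$, in which case $(T_j:C_j)=P_j=(S_j:C_j)$ by (7); intersecting over $j$ yields $(T:C)=(S:C)$. For the converse, argue by contraposition: if $S_i$ fails to be prime at some $i\in P_I$, build the two-degree enlargement $T$ with $T_i=S_i+Rm$ (and $T_{i-1}=S_{i-1}+Rd_i(m)$) for a witnessing $m\notin S_i$, and check that the resulting strict containment of annihilators contradicts $(T:C)=(S:C)$. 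Condition (11) is just the contrapositive of the $i$-slice of (3) restricted to the strict inclusions $(S_i:C_i)\subsetneq J_i$, $S_i\subsetneq T_i$, so $(3)\Leftrightarrow(11)$ is formal. The main obstacle is thus not any deep point but the index-by-index bookkeeping in (10)--(11): tracking when a component of $T$ equals that of $S$ and when it does not, and ensuring the degenerate indices $i\notin P_I$, where $S_i=C_i$, never interfere.
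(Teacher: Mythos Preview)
Your reduction to the coordinate-wise module statements is sound and considerably more detailed than the paper's proof, which only writes out $(1)\Leftrightarrow(10)$ and $(11)\Rightarrow(3)$ and then declares the rest ``clear.'' The realization lemma---that an arbitrary submodule $K\subseteq C_i$ arises as the $i$-th component of a genuine subcomplex---is a useful observation that the paper does not make explicit.

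There is, however, a genuine error in your treatment of $(1)\Rightarrow(10)$. You write that when $S_j\subsetneq T_j$ one has ``$(T_j:C_j)=P_j=(S_j:C_j)$ by (7)'', but condition (7) asserts $(S_j:N_j)=P_j$, i.e.\ it computes $\mathrm{Ann}(N_j/S_j)$, not $(N_j:C_j)=\mathrm{Ann}(C_j/N_j)$. These are different objects, and your claim is false: already for $R=\mathbb{Z}$, $C_j=\mathbb{Z}$, $S_j=2\mathbb{Z}$ (prime), $T_j=\mathbb{Z}$, one has $(T_j:C_j)=\mathbb{Z}\neq 2\mathbb{Z}=(S_j:C_j)$. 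In fact this example shows that the literal formula $(T:C)=(S:C)$ in the paper's statement of (10) cannot be what is meant; the paper's own proof of $(1)\to(10)$ computes $\mathrm{Ann}(m+S_i)=(S_i:C_i)$ for $m\notin S_i$, so the intended content of (10) is that every nonzero cyclic (equivalently, every nonzero) submodule of $C_i/S_i$ has annihilator $P_i$, i.e.\ $(S_i:T_i)=(S_i:C_i)$ coordinate-wise. Your argument goes through cleanly once you replace $(T_j:C_j)$ by $(S_j:T_j)$ and invoke (7) correctly. Your contrapositive sketch for $(10)\Rightarrow(1)$ likewise needs this correction: the quantity that strictly increases is $(S_i:T_i)$, since the witness $r$ lies in $(S_i:m)\setminus P_i$; there is no reason for $(T:C)$ to move.

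A smaller point: condition (5) as printed cannot hold for all ideals $J$ (take $J\subseteq P_i$); it must be read as $J\nsubseteq P_i$, parallel to (4). Your deduction $(3)\Rightarrow(5)$ and $(5)\Rightarrow(4)$ is fine once that restriction is in place.
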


\begin{proof}

$(1 \rightarrow 10):$ It suffices to show for any $m+S_i \in \frac{C_i}{S_i}$ with $m\in C_i-S_i$, $Ann(m+S_i)=(S_i:C_i)$. Taking $r \in (S_i: <m+S_i>)$ gives $rm \in S_i$ and we obtain from part (1) that $r \in (S_i:C_i)$.

$(10 \rightarrow 1):$ Suppose $rm \in S_i$, for $i \in P_I$, $m \in C_i \ S_i$ and $r \in R$. Part (10) gives $r \in (S_i:<m+S_i>)=(S_i:C_i)$.

$(11 \rightarrow 3):$ Suppose $J_i T_i \subseteq S_i$ for any subcomplex $T$ of $C$ and a family of ideals $\{J_i\}_{i \in I}$. Set $A_i=S_i+T_i$ and $B_i=J_i+(S_i:C_i)$. Then $T_i \nsubseteq S_i$ if and only if $A_i \nsubseteq S_i$ and $J_i \nsubseteq (S_i:C_i)$ if and only if $B_i \nsubseteq (S_i:C_i)$. Hence $B_i A_i \subseteq S_i$ and we conclude from part (10) that $T_i \subseteq S_i$ or $J_i \subseteq (S_i:C_i)$.

The remains are clear.
\end{proof}

\begin{theorem}\label{p2}
	A subcomplex $S$ of an $R-$complex $C$ with $P_i=(S_i:C_i)$ a maximal ideal of $R$ for all $ i \in P_I$ is a prime subcomplex of $C$.
\end{theorem}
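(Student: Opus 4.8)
The plan is to verify the definition directly: fix an index $i\in P_I$ and show that $S_i$ is a prime submodule of $C_i$. Since the differentials $d_i$ play no role in the statement, this reduces the theorem to the classical module-theoretic fact that a submodule whose residual ideal is maximal is prime, applied componentwise.

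First I would record that $S_i$ is proper: by the definition of $P_I$ we have $S_i\neq C_i$, hence $C_i\not\subseteq S_i$, so $P_i=(S_i:C_i)\neq R$ — which is consistent with $P_i$ being a maximal (in particular proper) ideal. Next, take $r\in R$ and $m\in C_i$ with $rm\in S_i$, and suppose $r\notin P_i$. Since $P_i$ is maximal, $P_i+(r)=R$, so there exist $p\in P_i$ and $s\in R$ with $1=p+sr$. Multiplying by $m$ gives $m=pm+s(rm)$. Now $pm\in P_iC_i\subseteq S_i$, because $P_i=(S_i:C_i)$ means precisely $P_iC_i\subseteq S_i$, and $s(rm)\in S_i$ because $rm\in S_i$ and $S_i$ is a submodule; therefore $m\in S_i$. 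Thus $r\in P_i$ or $m\in S_i$, i.e. $S_i$ is a prime submodule of $C_i$, and since this holds for every $i\in P_I$, $S$ is a prime subcomplex of $C$.

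Alternatively, and more in the spirit of the paper, I would invoke Theorem \ref{equi}: it suffices to check condition (2), that $\frac{C_i}{S_i}$ is a torsion-free $\frac{R}{P_i}$-module for each $i\in P_I$. But when $P_i$ is maximal, $\frac{R}{P_i}$ is a field, and every module over a field is free, in particular torsion-free; condition (2) is then immediate, so $S$ is prime.

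I do not anticipate a genuine obstacle: the mathematical content is the standard observation that $(N:M)$ maximal forces $N$ prime, and the only point requiring care is the indexing — one should restrict attention to $i\in P_I$, where $S_i$ is proper and $P_i$ is assumed maximal, rather than to all of $I$, since at indices outside $P_I$ one has $S_i=C_i$ and there is nothing to prove. If one adopts the first, self-contained argument, the single step worth spelling out is the identity $m=pm+s(rm)$ together with the two membership checks $pm\in S_i$ and $s(rm)\in S_i$.
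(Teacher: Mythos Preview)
Your proposal is correct. Your second argument---observing that $R/P_i$ is a field, so $C_i/S_i$ is a vector space and hence torsion-free over $R/P_i$, and then invoking condition (2) of Theorem~\ref{equi}---is exactly the paper's proof. Your first argument, writing $1=p+sr$ with $p\in P_i$ and deducing $m=pm+s(rm)\in S_i$, is a valid self-contained alternative that bypasses Theorem~\ref{equi} entirely; it buys independence from that earlier result at the cost of a slightly longer computation, while the paper's route is shorter but relies on the equivalence already established.
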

\begin{proof}
	The hypothesis gives us that $\frac{C_i}{S_i}$ is a vector space over the field $\frac{R}{P_i}$ and so a torsion-free $\frac{R}{P_i}-$module. The rest comes from Theorem \ref{equi}.
\end{proof}

\begin{corollary}
	For each maximal ideal $m$ of $R$ with $mC \neq C$, $mC$ is a prime subcomplex of $C$.
\end{corollary}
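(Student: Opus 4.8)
The plan is to realize $mC$ as the subcomplex $(mC_i, d_i)_{i\in I}$ and then reduce to Theorem \ref{p2}. First I would check that this really is a subcomplex: since each $d_{i+1}$ is an $R$-morphism, $d_{i+1}(mC_{i+1}) = m\, d_{i+1}(C_{i+1}) \subseteq mC_i$ for every $i$, so $mC \leq C$. The hypothesis $mC \neq C$ says precisely that $mC_i \neq C_i$ for at least one index, hence $mC$ is a \emph{proper} subcomplex of $C$, and its associated index set is $P_I = \{\, i \in I : mC_i \neq C_i \,\}$.

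Next, for each $i \in P_I$ I would compute the residual $(mC_i : C_i)$. On the one hand $m \subseteq (mC_i : C_i)$, since $rC_i \subseteq mC_i$ whenever $r \in m$. On the other hand $(mC_i : C_i)$ is a \emph{proper} ideal of $R$: if it were all of $R$ then $C_i = R C_i \subseteq mC_i$, contradicting $i \in P_I$. Since $m$ is maximal, the only proper ideal of $R$ containing $m$ is $m$ itself, so $(mC_i : C_i) = m$, which is a maximal ideal.

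Finally, having shown that $(mC_i : C_i)$ is a maximal ideal of $R$ for every $i \in P_I$, Theorem \ref{p2} applies verbatim and yields that $mC$ is a prime subcomplex of $C$.

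The argument is short, and the only point requiring a moment's care is the bookkeeping around $P_I$: one must observe that the indices $i$ at which $mC_i = C_i$ are harmless (they are excluded from $P_I$ by definition, so nothing needs to be checked there), while at the remaining indices the residual $(mC_i:C_i)$ is \emph{forced} to be a proper ideal — and it is exactly this properness that upgrades ``$m$ is maximal'' to ``$(mC_i:C_i)$ is maximal'', putting us in the hypothesis of Theorem \ref{p2}.
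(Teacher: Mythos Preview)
Your proof is correct and follows exactly the approach implicit in the paper: the corollary is stated immediately after Theorem~\ref{p2} with no separate proof, so it is meant as a direct application of that theorem, and you have supplied precisely the routine verifications (that $mC$ is a proper subcomplex and that $(mC_i:C_i)=m$ for each $i\in P_I$) needed to invoke it.
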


\begin{corollary}
	Let $\{m_i\}_{i \in I}$ be a family of maximal ideals of $R$. A proper subcomplex $S$ is $m_i-$prime if and only if $m_i C_i \subseteq S_i$. In particular, if $S$ is an $m_i-$prime subcomplex of $C$, then each proper subcomplex $T$ of $C$ containing $S$ is prime.
\end{corollary}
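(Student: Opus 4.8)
The plan is to read off the biconditional directly from the definition of an $m_i$-prime subcomplex together with Theorem \ref{p2}, and then to obtain the ``in particular'' clause by transporting the defining containment along the inclusion $S\subseteq T$.

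\emph{Forward implication.} Assume $S$ is $m_i$-prime. By definition, for every $i\in P_I$ the submodule $S_i$ is prime in $C_i$ and $(S_i:C_i)=m_i$. Since $(S_i:C_i)=\{r\in R: rC_i\subseteq S_i\}$, the equality $(S_i:C_i)=m_i$ gives in particular $m_i C_i\subseteq S_i$ for all $i\in P_I$.

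\emph{Converse.} Suppose $m_iC_i\subseteq S_i$ for all $i\in P_I$, i.e.\ $m_i\subseteq(S_i:C_i)$. Fix $i\in P_I$. As $S$ is proper with $S_i\neq C_i$, the residual $(S_i:C_i)$ is a proper ideal of $R$: if some unit $r$ satisfied $rC_i\subseteq S_i$, then $C_i=r^{-1}(rC_i)\subseteq S_i$, a contradiction. Hence $m_i\subseteq(S_i:C_i)\subsetneq R$, and maximality of $m_i$ forces $(S_i:C_i)=m_i$. This holds for every $i\in P_I$ with each $(S_i:C_i)=m_i$ maximal, so Theorem \ref{p2} shows that $S$ is a prime subcomplex of $C$; and since $(S_i:C_i)=m_i$, it is precisely an $m_i$-prime subcomplex.

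\emph{The ``in particular'' clause.} Let $T$ be a proper subcomplex of $C$ with $S\subseteq T$, and write $P_I(S)$ and $P_I(T)$ for the associated index sets. If $T_i\neq C_i$ then $S_i\subseteq T_i\subsetneq C_i$, so $S_i\neq C_i$; thus $P_I(T)\subseteq P_I(S)$. For each $i\in P_I(T)$ we then have $m_iC_i\subseteq S_i\subseteq T_i$, so $T$ satisfies the hypothesis of the converse direction and is therefore an $m_i$-prime subcomplex, in particular prime.

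I expect the argument to be essentially mechanical; the only point requiring attention is the bookkeeping with the index sets, namely that enlarging the subcomplex from $S$ to $T$ can only shrink the relevant index set, so the containment $m_iC_i\subseteq S_i$ is automatically available on exactly the indices $i\in P_I(T)$ where it is needed for $T$. Everything else is just the definition of the residual together with the maximality of each $m_i$.
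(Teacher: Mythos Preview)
Your proof is correct and follows precisely the approach the paper intends: the corollary is stated without proof as an immediate consequence of Theorem~\ref{p2}, and your argument simply unpacks this, checking that $m_i\subseteq(S_i:C_i)\subsetneq R$ forces $(S_i:C_i)=m_i$ by maximality and then invoking Theorem~\ref{p2}. The bookkeeping you flag for the ``in particular'' clause, namely $P_I(T)\subseteq P_I(S)$, is the only point with any content, and you handle it correctly.
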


 A proper subcomplex $S$ of an $R-$complex $C$ is said to be a maximal subcomplex if there is no proper subcomplex contains $S$ properly, or equivalently, if $S_i$ is a maximal submodule of $C_i$ for all $i \in P_I$. In commutative ring with identity, every maximal ideal is prime. Now, we prove this for subcomplexes.

\begin{proposition}
	Suppose $S$ is a maximal subcomplex of a complex $C$. Then $S$ is a prime subcomplex of $C$ and $(S_i:C_i)$ is a maximal ideal of $R$.
\end{proposition}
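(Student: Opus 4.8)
The plan is to reduce the claim to Theorem~\ref{p2} by showing that maximality of $S_i$ inside $C_i$ forces $(S_i:C_i)$ to be a maximal ideal of $R$. First I would fix an index $i \in P_I$, so that by the component-wise description of maximal subcomplexes recalled just above the statement, $S_i$ is a maximal submodule of the $R$-module $C_i$. Consequently the quotient $C_i/S_i$ is a nonzero simple $R$-module: it is nonzero because $i \in P_I$ means $S_i \neq C_i$, and it has no submodules other than $0$ and itself by maximality of $S_i$.

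Next I would identify the residual with an annihilator. Pick any $0 \neq \bar m = m + S_i \in C_i/S_i$; since $C_i/S_i$ is simple, $R\bar m = C_i/S_i$, so the map $r \mapsto r\bar m$ induces an isomorphism $R/Ann_R(\bar m) \cong C_i/S_i$. A cyclic module $R/J$ is simple precisely when $J$ is a maximal ideal, hence $Ann_R(\bar m)$ is maximal; moreover this ideal does not depend on the choice of $\bar m$ and coincides with $Ann_R(C_i/S_i) = \{r \in R : rC_i \subseteq S_i\} = (S_i:C_i)$. Thus $P_i = (S_i:C_i)$ is a maximal ideal of $R$ for every $i \in P_I$, which establishes the second assertion of the proposition.

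Finally, having shown that $(S_i:C_i)$ is a maximal ideal of $R$ for all $i \in P_I$, Theorem~\ref{p2} applies directly and yields that $S$ is a prime subcomplex of $C$. I do not expect a genuine obstacle here: the only points requiring care are to apply the component-wise characterization of maximality only at indices $i \in P_I$ (at indices with $S_i = C_i$ there is nothing to verify), and to record that $C_i/S_i \neq 0$ for such $i$, so that its annihilator is a proper ideal and hence an honest maximal ideal rather than all of $R$.
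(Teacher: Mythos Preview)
Your proposal is correct and follows essentially the same route as the paper: observe that for $i\in P_I$ the quotient $C_i/S_i$ is simple, hence cyclic with $(S_i:C_i)=Ann_R(C_i/S_i)$ a maximal ideal, and then invoke Theorem~\ref{p2}. Your version is in fact more carefully stated, since you restrict attention to $i\in P_I$ and note properness of the annihilator explicitly.
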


	\begin{proof}
	For $i \in I$, $S_i$ is maximal submodule of $C_i$ if and only if $\frac{C_i}{S_i}$ is a simple $R-$module. Thus $\frac{C_i}{S_i}$ a cyclic $R-$module say $\frac{C_i}{S_i}= Rx_i$, $x_i \in C_i$ and $Ann(Rx_i)= Ann(\frac{C_i}{S_i})= (S_i:C_i)$ is a maximal ideal of $R$ by \cite[Proposition 3]{bour}. Hence $S$ is a prime subcomplex of $C$ due to Proposition \ref{p2}
\end{proof}

A finitely generated complex is a complex whose terms are finitely generated. So we have the following.

\begin{corollary}
	If $C$ is a finitely generated $R-$complex, then every proper subcomplex of $C$ is contained in a prime subcomplex.
\end{corollary}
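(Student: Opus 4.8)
The plan is to enlarge $S$ by adjoining $\mathfrak{m}C$ for a suitably chosen maximal ideal $\mathfrak{m}$ of $R$, rather than by picking a maximal submodule of each $C_i$ componentwise: the latter choice need not be compatible with the differentials, whereas $S+\mathfrak{m}C$ automatically is. Concretely, since $S$ is proper the set $P_I$ is nonempty, so I would fix some $i_0\in P_I$ and set $\mathfrak{a}=(S_{i_0}:C_{i_0})$. As $C_{i_0}\nsubseteq S_{i_0}$ we have $1\notin\mathfrak{a}$, hence $\mathfrak{a}$ is a proper ideal, and I would choose a maximal ideal $\mathfrak{m}$ of $R$ with $\mathfrak{a}\subseteq\mathfrak{m}$.

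Next I would define $T=(T_i,d_i)$ by $T_i=S_i+\mathfrak{m}C_i$ for every $i\in I$. This is a subcomplex containing $S$: using $R$-linearity of $d_{i+1}$ and $d_{i+1}(S_{i+1})\subseteq S_i$ we get $d_{i+1}(T_{i+1})=d_{i+1}(S_{i+1})+\mathfrak{m}\,d_{i+1}(C_{i+1})\subseteq S_i+\mathfrak{m}C_i=T_i$, and obviously $S_i\subseteq T_i$. To see $T$ is proper I would invoke Nakayama's lemma, and this is the step that uses the hypothesis that $C$ is finitely generated. The module $C_{i_0}/S_{i_0}$ is nonzero and finitely generated with $\mathrm{Ann}(C_{i_0}/S_{i_0})=\mathfrak{a}\subseteq\mathfrak{m}$, so $\mathfrak{m}(C_{i_0}/S_{i_0})\neq C_{i_0}/S_{i_0}$, that is, $T_{i_0}=S_{i_0}+\mathfrak{m}C_{i_0}\subsetneq C_{i_0}$. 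Hence $T$ is a proper subcomplex of $C$.

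It remains to show $T$ is prime, which I would deduce from Theorem~\ref{p2}. For each index $i$ with $T_i\neq C_i$ the residual $(T_i:C_i)$ is a proper ideal of $R$ containing $\mathfrak{m}$ (because $\mathfrak{m}C_i\subseteq T_i$), so by maximality of $\mathfrak{m}$ we get $(T_i:C_i)=\mathfrak{m}$, a maximal ideal. Theorem~\ref{p2} then gives that $T$ is a prime subcomplex of $C$, and since $S\leq T$ this proves the corollary. I do not anticipate any real difficulty here: the only delicate point is the properness of $T$, and that is exactly where finite generation and Nakayama's lemma are needed.
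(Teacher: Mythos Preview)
Your argument is correct. The Nakayama step is sound in the form you need: if $M=C_{i_0}/S_{i_0}$ is finitely generated with $\mathfrak m M=M$, the Cayley--Hamilton trick gives $a\in\mathfrak m$ with $(1-a)M=0$, so $1-a\in\mathrm{Ann}(M)=\mathfrak a\subseteq\mathfrak m$ and hence $1\in\mathfrak m$, a contradiction; thus $T_{i_0}\subsetneq C_{i_0}$. The rest is routine.

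Your route differs from the one the paper suggests. The corollary is placed immediately after the proposition that maximal subcomplexes are prime, so the intended argument is: enlarge $S$ to a maximal subcomplex (using finite generation, presumably via Zorn's lemma) and invoke that proposition. You instead bypass maximal subcomplexes entirely, building $T=S+\mathfrak m C$ directly and applying Theorem~\ref{p2}. Your approach has a real advantage: it is explicit and sidesteps the issue you correctly flag, namely that choosing maximal submodules of the $C_i$ componentwise need not produce a subcomplex, and the paper does not spell out how to obtain a maximal subcomplex compatibly with the differentials. Both arguments ultimately rest on Theorem~\ref{p2} (the paper's proposition on maximal subcomplexes also reduces to it), but yours gets there with fewer unexplained steps.
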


\vspace{.3cm}
An $R-$complex $F=\{F_i,d_i\}_{i \in I}$ is called flat if all its component are flat over $R$.
\vspace{.3cm}

\begin{lemma}\label{lem}
	Let $C$ and $F$ be $R-$complexes and $S$ be a subcomplex of $C$. Then $F_i \otimes (S_i:r) \subseteq (F_i \otimes S_i:r)$, for any $r \in R$. The equality holds if $F$ is a flat complex.
\end{lemma}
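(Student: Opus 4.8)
The plan is to treat the statement componentwise: for each fixed $i \in I$ it is purely a statement about the $R$-modules $F_i$, $C_i$ and the submodule $S_i \subseteq C_i$ (which is a submodule precisely because $S$ is a subcomplex of $C$), and the differentials play no role. Write $\pi_i : C_i \to C_i/S_i$ for the quotient map and recall that $(S_i:r) = \{m \in C_i : rm \in S_i\}$ is exactly the kernel of the composite $\varphi_i := \pi_i \circ (\cdot r) : C_i \xrightarrow{\cdot r} C_i \xrightarrow{\pi_i} C_i/S_i$. Throughout, $F_i \otimes S_i$ and $F_i \otimes (S_i:r)$ are understood via the maps $F_i \otimes S_i \to F_i \otimes C_i$ and $F_i \otimes (S_i:r) \to F_i \otimes C_i$ induced by the inclusions.

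For the inclusion $F_i \otimes (S_i:r) \subseteq (F_i \otimes S_i : r)$, I would verify it on generators: if $f \in F_i$ and $m \in (S_i:r)$, then $r(f \otimes m) = f \otimes (rm)$ lies in the image of $F_i \otimes S_i$ since $rm \in S_i$, hence $f \otimes m \in (F_i \otimes S_i : r)$. As such elements generate the image of $F_i \otimes (S_i:r)$ in $F_i \otimes C_i$, the inclusion follows; this direction uses nothing about flatness.

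For the reverse inclusion when $F$ is flat, the key point is that applying the exact functor $F_i \otimes_R -$ to the left-exact sequence $0 \to (S_i:r) \to C_i \xrightarrow{\varphi_i} C_i/S_i$ yields the exact sequence $0 \to F_i \otimes (S_i:r) \to F_i \otimes C_i \xrightarrow{1 \otimes \varphi_i} F_i \otimes (C_i/S_i)$, so that $F_i \otimes (S_i:r) = \ker(1 \otimes \varphi_i)$. Now factor $1 \otimes \varphi_i$ as $F_i \otimes C_i \xrightarrow{\,\cdot r\,} F_i \otimes C_i \xrightarrow{1 \otimes \pi_i} F_i \otimes (C_i/S_i)$, where multiplication by $r$ on the first arrow coincides with $1 \otimes (\cdot r)$ because $R$ is commutative. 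Flatness makes $1 \otimes \pi_i$ surjective with kernel equal to (the image of) $F_i \otimes S_i$, so $\ker(1 \otimes \varphi_i) = \{z \in F_i \otimes C_i : rz \in F_i \otimes S_i\} = (F_i \otimes S_i : r)$. Combining the two inclusions gives the asserted equality.

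I do not expect a genuine obstacle here; the only subtlety to state cleanly is the identification of $F_i \otimes S_i$ and $F_i \otimes (S_i:r)$ with actual submodules of $F_i \otimes C_i$, which is automatic once $F_i$ is flat and is exactly why the equality (as opposed to the one-sided inclusion) needs the flatness hypothesis.
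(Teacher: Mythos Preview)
Your proof is correct and follows essentially the same route as the paper: verify the inclusion on elementary tensors, then for the reverse inclusion under flatness tensor the left-exact sequence $0 \to (S_i:r) \to C_i \xrightarrow{\varphi_i} C_i/S_i$ with $F_i$ and identify the resulting kernel with $(F_i \otimes S_i : r)$. Your write-up is in fact more careful than the paper's, which writes ``$\mathrm{Im}$'' where ``$\ker$'' is meant and leaves the identification $F_i \otimes (C_i/S_i) \cong (F_i \otimes C_i)/(F_i \otimes S_i)$ implicit; your explicit remark that flatness is precisely what lets one regard $F_i \otimes S_i$ and $F_i \otimes (S_i:r)$ as genuine submodules of $F_i \otimes C_i$ is a worthwhile clarification.
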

\begin{proof}
	Let $f \otimes c \in F_i \otimes (S_i: r)$ for $f \in F_i$, $rc \in S_i$ and $i \in I$. Then $r (f \otimes c) = f \otimes rc \in F_i \otimes S_i$. Now, suppose $F$ is flat. Then clearly $F_i \otimes \frac{C_i}{S_i} \cong \frac{F_i \otimes C_i}{F_i \otimes S_i}$. Also, exactness of $$0 \to F_i \otimes (S_i:r) \to F_i \otimes C_i  \stackrel{i \otimes f_r}\to \frac{F_i \otimes C_i}{F_i \otimes S_i}$$
	gives that $Im(i \otimes f_r)=F_i \otimes (S_i :r)= (F_i \otimes S_i:r)$, $\forall i \in I$.
\end{proof}

\begin{theorem}\label{tt}
	Let $S$ be a prime subcomplex of $R-$complex $C$ and $F$ be flat $R-$complex with $F \otimes S \neq F \otimes C$. Then $F \otimes S $ is a prime subcomplex of $F \otimes C$.
\end{theorem}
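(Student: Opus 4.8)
The plan is to reduce everything to the module-level statement, since "prime subcomplex" is defined componentwise on the indices in $P_I$. First I would identify the index set $P_I(F\otimes S)$ of the tensored pair. The hypothesis $F\otimes S\neq F\otimes C$ guarantees this set is nonempty; moreover, flatness of $F$ gives an injection $F_i\otimes S_i\hookrightarrow F_i\otimes C_i$, so $F_i\otimes S_i=F_i\otimes C_i$ whenever $S_i=C_i$, hence $P_I(F\otimes S)\subseteq P_I(S)$. Thus it suffices to show that for each $i\in P_I(F\otimes S)$ (so $i\in P_I(S)$ and $S_i$ is a prime submodule of $C_i$), the module $F_i\otimes S_i$ is a prime submodule of $F_i\otimes C_i$.

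For the module-level claim I would use the criterion of Theorem \ref{equi}, in the form that a proper submodule $N\le M$ is prime iff $(N:r)=N$ for every $r\in R\setminus(N:M)$, equivalently iff the multiplication map $f_r$ on $M/N$ is injective or zero for every $r\in R$. Fix $i\in P_I(F\otimes S)$ and write $P_i=(S_i:C_i)$. Take $r\notin P_i$; since $S_i$ is prime in $C_i$ we have $(S_i:r)=S_i$. By Lemma \ref{lem}, flatness of $F$ yields $F_i\otimes S_i=F_i\otimes(S_i:r)=(F_i\otimes S_i:r)$. Hence $(F_i\otimes S_i:r)=F_i\otimes S_i$ for all $r\notin P_i$. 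It remains to argue that $(F_i\otimes S_i:C_i\otimes F_i)\subseteq P_i$, i.e. that we have not accidentally enlarged the relevant primeness condition beyond $r\notin P_i$: this follows because $r\in(F_i\otimes S_i:F_i\otimes C_i)$ means $r(F_i\otimes C_i)\subseteq F_i\otimes S_i$, and tensoring the exact sequence $0\to S_i\to C_i\to C_i/S_i\to 0$ with the flat module $F_i$ shows $r(C_i/S_i)\otimes F_i=0$; since $r$ acts injectively on $C_i/S_i$ (as $S_i$ is prime and $r\notin P_i$) and $F_i$ is flat, $r$ acts injectively on $(C_i/S_i)\otimes F_i$, forcing $(C_i/S_i)\otimes F_i=0$, contradicting $i\in P_I(F\otimes S)$ unless $r\in P_i$. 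Therefore $(F_i\otimes S_i:F_i\otimes C_i)=P_i$ and the multiplication criterion is verified: $F_i\otimes S_i$ is prime in $F_i\otimes C_i$.

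Collecting the components, $F\otimes S$ is a proper subcomplex of $F\otimes C$ (by hypothesis) whose $i$-th term is a prime submodule for every $i\in P_I(F\otimes S)$, which is exactly the definition of a prime subcomplex.

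The main obstacle I anticipate is the bookkeeping around which indices actually lie in $P_I(F\otimes S)$ and showing the residual ideal $(F_i\otimes S_i:F_i\otimes C_i)$ is exactly $P_i$ rather than something larger; this is where one genuinely needs flatness (not just to get the containment in Lemma \ref{lem} but to transport injectivity of multiplication on $C_i/S_i$ to $(C_i/S_i)\otimes F_i$). Everything else is a direct application of Lemma \ref{lem} and the equivalent characterizations in Theorem \ref{equi}.
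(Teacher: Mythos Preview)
Your proposal is correct and follows essentially the same route as the paper: apply Lemma \ref{lem} together with criterion (4) of Theorem \ref{equi} componentwise. Your extra verification that $(F_i\otimes S_i:F_i\otimes C_i)=P_i$ is correct but not strictly required, since $P_i\subseteq (F_i\otimes S_i:F_i\otimes C_i)$ automatically, so establishing $(F_i\otimes S_i:r)=F_i\otimes S_i$ for all $r\notin P_i$ already covers all $r\notin (F_i\otimes S_i:F_i\otimes C_i)$; the paper simply omits this and your index-set bookkeeping.
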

\begin{proof}
	We prove this theorem by the equivalence (4) of prime subcomplexes in Theorem \ref{equi}. For this purpose, let $r \in R- (S_i:C_i)$ for $i \in P_I$. Then the primeness of $S$ and Theorem \ref{equi} (4) gives us $(S_i:r)= S_i$ and we obtain from Lemma \ref{lem} that $(F_i \otimes S_i:r) =F_i \otimes (S_i:r)= F_i \otimes S_i$. Therefore by Theorem \ref{equi} again, we conclude that $F\otimes S$ is a prime subcomplex of $F \otimes C$.
\end{proof}

The converse of Theorem \ref{tt} is true in the case where $F$ is faithfully flat, that is, all components of $F$ are faithfully flat.

\begin{theorem}\label{ff}
	Let $S$ be a subcomplex of $R-$complex $C$ and $F$ be a faithfully flat complex over $R$. Then $S$ is a prime subcomplex of $C$ if and only if $F\otimes S$ is a prime subcomplex of $F \otimes C$.
\end{theorem}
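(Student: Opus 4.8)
The plan is to get the forward implication essentially for free from Theorem \ref{tt}, and to prove the converse through the residual characterisation in Theorem \ref{equi}(4), pushing information from $F\otimes C$ back down to $C$ by faithful flatness. For the implication $(\Rightarrow)$: if $S$ is prime it is in particular proper, so $S_j\subsetneq C_j$ for some $j\in I$, hence $C_j/S_j\neq 0$, and faithful flatness gives $F_j\otimes (C_j/S_j)\neq 0$. Since $F$ is flat, $F_j\otimes (C_j/S_j)\cong (F_j\otimes C_j)/(F_j\otimes S_j)$, so $F_j\otimes S_j\subsetneq F_j\otimes C_j$ and therefore $F\otimes S\neq F\otimes C$. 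Now Theorem \ref{tt} applies and yields that $F\otimes S$ is a prime subcomplex of $F\otimes C$.

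For the converse I would first set up a dictionary between the two sides. The elementary fact I would isolate is: for submodules $A,B$ of an $R$-module $M$ and a faithfully flat $R$-module $F_j$, one has $F_j\otimes A\subseteq F_j\otimes B$ inside $F_j\otimes M$ if and only if $A\subseteq B$; indeed, tensoring the exact sequence $0\to B\to A+B\to (A+B)/B\to 0$ shows $F_j\otimes A\subseteq F_j\otimes B$ iff $F_j\otimes ((A+B)/B)=0$ iff $(A+B)/B=0$. Taking $A=C_j$, $B=S_j$ shows $S_j\neq C_j \iff F_j\otimes S_j\neq F_j\otimes C_j$, so $S$ is proper and the index set $P_I$ of $S$ coincides with the corresponding index set of $F\otimes S$. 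Taking $A=rC_j$, $B=S_j$ and using $r(F_j\otimes C_j)=F_j\otimes rC_j$ (flatness) shows $(F_j\otimes S_j : F_j\otimes C_j)=(S_j:C_j)=:P_j$ for every $j\in P_I$.

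It then remains to verify condition (4) of Theorem \ref{equi} for $S$. Fix $j\in P_I$ and $r\in R\setminus P_j$. Since $F\otimes S$ is prime and $(F_j\otimes S_j : F_j\otimes C_j)=P_j$, Theorem \ref{equi}(4) gives $(F_j\otimes S_j : r)=F_j\otimes S_j$. By the equality case of Lemma \ref{lem} (here $F$ is flat), $F_j\otimes (S_j:r)=(F_j\otimes S_j : r)=F_j\otimes S_j$; since $S_j\subseteq (S_j:r)$, tensoring $0\to S_j\to (S_j:r)\to (S_j:r)/S_j\to 0$ and invoking faithful flatness forces $(S_j:r)=S_j$. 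As $j$ and $r$ were arbitrary, Theorem \ref{equi}(4) shows that $S$ is a prime subcomplex of $C$.

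The only genuine obstacle here is the bookkeeping: one must make sure that the index set $P_I$ and the ideals $P_j=(S_j:C_j)$ are preserved by the functor $-\otimes F$, and this is precisely the point at which faithful flatness (rather than plain flatness) is needed, namely to deduce $M=0$ from $F_j\otimes M=0$. Everything else is routine manipulation with the base-change isomorphism $F_j\otimes (C_j/S_j)\cong (F_j\otimes C_j)/(F_j\otimes S_j)$, Lemma \ref{lem}, and the equivalences in Theorem \ref{equi}.
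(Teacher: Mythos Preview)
Your argument is correct and follows essentially the same route as the paper: Theorem~\ref{tt} for the forward direction, and for the converse Lemma~\ref{lem} together with Theorem~\ref{equi}(4) and faithful flatness to pull the identity $(F_j\otimes S_j:r)=F_j\otimes S_j$ back to $(S_j:r)=S_j$. Your write-up is in fact more careful than the paper's, which asserts the inclusion $(F_i\otimes S_i:F_i\otimes C_i)\subseteq (S_i:C_i)$ and the matching of the index sets $P_I$ without spelling out the faithful-flatness argument you give.
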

\begin{proof}
	Since the exactness of the sequence
	$$0 \to F_i \otimes S_i \to F_i \otimes C_i \to 0$$ implies the exactness of of the sequence
	$$0 \to S_i \to C_i \to 0,$$
	so we can assume that $F_i \otimes S_i \neq F_i \otimes C_i$, $\forall i \in P_I$ and Theorem \ref{tt} tells us that $F \otimes S $ is a prime subcomplex of $F\otimes C$. Now, let $F \otimes S$ be a prime subcomplex of $F \otimes C$ and so $S$ is a proper subcomplex of $C$. Thus for any $r \in R - (S_i: C_i)$, $\forall i \in P_I$, we have $r \in R - (F_i \otimes S_i: F_i \otimes C_i)$ and Lemma \ref{lem} implies that $$F_i \otimes (S_i:r) = (F_i \otimes S_i:r)= F_i \otimes S_i$$
	 As $F$ is faithfully flat complex, we have $(S_i:r)=S_i$, $\forall i \in P_I$. Hence by Theorem \ref{equi} (4) again, $S$ is a prime subcomplex of $C$ as desired.
\end{proof}

\begin{corollary}

	\begin{enumerate}
		\item Let $T$ be a multiplicative subset of $R$ and $S = (S_i,d_i)$ be a proper subcomplex of an $R-$complex $C=(C_i, d_i)$ with $T \cap (S_i:C_i)= \phi$, $\forall i \in P_I$. Then $T^{-1}S = (T^{-1}S_i, T^{-1}d_i)$ is a prime subcomplex of $T^{-1}C= (T^{-1}C_i, T^{-1}d_i)$ if and only if $S$ is a prime subcomplex of $C$.
		
		\item Let $T= \{T_i: i \in I\}$ be a family of multiplicative subsets of $R$ and $S=(S_i,d_i)$ be a proper subcomplex of $C=(C_i,d_i)$ such that $(S_i:C_i) \cap T_i = \phi$, $\forall i \in P_I$. Then $T^{-1}S=(T_i^{-1} S_i, T^{-1}_i d_i)$ is a prime subcomplex of $T^{-1}C= (T_i^{-1}C_i, T_i^{-1}d_i)$ if and only if $S$ is a prime subcomplex of $C$.
	\end{enumerate}	
\end{corollary}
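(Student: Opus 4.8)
The plan is to recognize localization as a flat base change and then read off both implications from Theorems \ref{tt} and \ref{ff}, using the disjointness hypothesis to supply in the delicate direction what faithful flatness would otherwise give. First I would record the standard identification $T^{-1}C_i\cong T^{-1}R\otimes_R C_i$, natural in $C_i$ and compatible with the $d_i$, so that $T^{-1}C=(T^{-1}C_i,T^{-1}d_i)$ is, up to isomorphism of complexes, the complex $F\otimes C$ where $F$ is the $R$-complex with $F_i=T^{-1}R$ for all $i$ and zero differentials; likewise $T^{-1}S$ corresponds to $F\otimes S$. Since $T^{-1}R$ is a flat $R$-module, $F$ is a flat $R$-complex. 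The hypothesis $T\cap(S_i:C_i)=\phi$ for $i\in P_I$ is then used twice: it guarantees that $T^{-1}S$ is again a \emph{proper} subcomplex of $T^{-1}C$ with the same set of critical indices $P_I$ (for $i\notin P_I$ one has $S_i=C_i$, hence $T^{-1}S_i=T^{-1}C_i$ trivially), and it will also play the role of faithful flatness in the converse below.

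For the implication ``$S$ prime $\Rightarrow T^{-1}S$ prime'' I would simply invoke Theorem \ref{tt}: $F$ is a flat $R$-complex and, by the previous paragraph, $F\otimes S=T^{-1}S\neq T^{-1}C=F\otimes C$, so Theorem \ref{tt} gives that $T^{-1}S$ is a prime subcomplex of $T^{-1}C$. No further work is needed here.

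For the converse I would mirror the proof of Theorem \ref{ff}, with the disjointness hypothesis standing in for faithful flatness, and verify criterion (4) of Theorem \ref{equi} for $S$. Fix $i\in P_I$ and $r\in R-(S_i:C_i)$; the goal is $(S_i:r)=S_i$. Using $T\cap(S_i:C_i)=\phi$ one checks that $\tfrac{r}{1}$ does not lie in $(T^{-1}S_i:T^{-1}C_i)$, so, $T^{-1}S$ being prime, Theorem \ref{equi}(4) applied to $T^{-1}S\leq T^{-1}C$ yields $(T^{-1}S_i:\tfrac{r}{1})=T^{-1}S_i$. Since $T^{-1}R$ is flat, Lemma \ref{lem} gives $T^{-1}(S_i:r)=(T^{-1}S_i:r)=T^{-1}S_i$, i.e. $T^{-1}\big((S_i:r)/S_i\big)=0$. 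But the hypothesis $T\cap(S_i:C_i)=\phi$ makes the localization map $C_i/S_i\to T^{-1}(C_i/S_i)$ injective, so any submodule of $C_i/S_i$ that localizes to zero is zero; applying this to $(S_i:r)/S_i$ gives $(S_i:r)=S_i$. Hence $S$ is a prime subcomplex of $C$ by Theorem \ref{equi}.

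I expect the converse to be the main obstacle: because $T^{-1}R$ is flat but in general \emph{not} faithfully flat, Theorem \ref{ff} cannot be quoted verbatim, and one must extract from the single hypothesis $T\cap(S_i:C_i)=\phi$ both the injectivity of $C_i/S_i\to T^{-1}(C_i/S_i)$ and the fact that $\tfrac{r}{1}$ stays outside $(T^{-1}S_i:T^{-1}C_i)$. Part (2) needs no new idea: every statement in sight is indexed by $i\in I$, so one applies part (1) degree by degree, using the multiplicative set $T_i$ and the hypothesis $(S_i:C_i)\cap T_i=\phi$ at the $i$-th term, and then reassembles the conclusion over all $i\in I$.
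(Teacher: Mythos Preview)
Your forward direction (``$S$ prime $\Rightarrow T^{-1}S$ prime'') via Theorem~\ref{tt} is fine and is essentially what the paper does: the paper's entire proof is the one-liner ``the proof comes from Theorem~\ref{ff} and the fact that $T^{-1}S=T^{-1}R\otimes S$.'' You are right to flag that $T^{-1}R$ is flat but \emph{not} faithfully flat, so Theorem~\ref{ff} does not literally apply to the converse; the paper glosses over this.

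Your attempted repair of the converse, however, has a genuine gap. The key claim ``the hypothesis $T\cap(S_i:C_i)=\emptyset$ makes the localization map $C_i/S_i\to T^{-1}(C_i/S_i)$ injective'' is false: disjointness from $(S_i:C_i)=\mathrm{Ann}_R(C_i/S_i)$ only says that no element of $T$ kills \emph{all} of $C_i/S_i$, whereas injectivity of the localization map requires $T$ to avoid the zero-divisors $Z_R(C_i/S_i)$, a strictly stronger condition. The same issue undermines the earlier step ``$\tfrac{r}{1}\notin(T^{-1}S_i:T^{-1}C_i)$.'' Concretely, take $R=C_0=\mathbb{Z}$, $S_0=6\mathbb{Z}$, and $T=\{2^n:n\ge 0\}$. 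Then $(S_0:C_0)=6\mathbb{Z}$ meets $T$ in nothing, yet $T^{-1}S_0=3\,\mathbb{Z}[\tfrac{1}{2}]$ is a prime $\mathbb{Z}$-submodule of $\mathbb{Z}[\tfrac{1}{2}]$ while $6\mathbb{Z}$ is not prime in $\mathbb{Z}$; here $2\in T$ is a zero-divisor on $\mathbb{Z}/6\mathbb{Z}$ and the localization map on $C_0/S_0$ is not injective. So the converse cannot be salvaged from the stated hypothesis alone, and neither the paper's bare citation of Theorem~\ref{ff} nor your substitute injectivity argument goes through.
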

\begin{proof}
	The proof come from Theorem \ref{ff} and the fact that $T^{-1}S=T^{-1}R \otimes S$.
\end{proof}

\begin{corollary}
	Let $S=(S_i,d_i)$ be a prime subcomplex of an $R-$complex $C=(C_i,d_i)$ and $x$ be an indeterminate. Then $S[x]=(S_i[x], d^*_i)$ is a prime subcomplex of $C[x]= (C_i[x], d^*_i)$, where $d_i^*(\sum_{i=1} ^n m_i x^i) = \sum_{i=1} ^n d_i(m_i) x^i$, for $m_i \in S_i$.
\end{corollary}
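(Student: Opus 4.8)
The plan is to deduce this from the faithful flatness of the polynomial ring, in exactly the way the previous corollary was deduced from localization. First I would record the componentwise identifications $C_i[x]\cong R[x]\otimes_R C_i$ and $S_i[x]\cong R[x]\otimes_R S_i$, under which $d_i^*$ corresponds to $1_{R[x]}\otimes d_i$; in particular $S[x]$ is genuinely a subcomplex of $C[x]$, since $d_{i+1}(S_{i+1})\subseteq S_i$ forces $d_{i+1}^*(S_{i+1}[x])\subseteq S_i[x]$. Thus $C[x]=F\otimes C$ and $S[x]=F\otimes S$, where $F$ is the constant $R$-complex with $F_i=R[x]$ for all $i$, understood in the same extension-of-scalars sense in which $T^{-1}C=T^{-1}R\otimes C$ is used above.

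Next, since $R[x]$ is a free $R$-module with basis $\{1,x,x^2,\dots\}$, it is faithfully flat over $R$, so $F$ is a faithfully flat $R$-complex. Before Theorem \ref{ff} applies I would check that $S[x]$ is \emph{proper}: if $S_i[x]=C_i[x]$ then $R[x]\otimes_R (C_i/S_i)=0$, whence $C_i/S_i=0$ by faithful flatness, i.e. $S_i=C_i$; applied to an index in $P_I$ this is a contradiction, and simultaneously it shows that the set of indices at which $S[x]$ differs from $C[x]$ is again $P_I$. Since the relevant quotient is $C_i[x]/S_i[x]\cong (C_i/S_i)[x]$, Theorem \ref{ff} applied to the faithfully flat complex $F$ now yields at once that $S[x]=F\otimes S$ is a prime subcomplex of $C[x]=F\otimes C$.

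The only point I expect to require care is bookkeeping around the base ring. Theorem \ref{ff} is phrased for a faithfully flat $R$-complex and therefore delivers primeness of $F\otimes S$ over $R$; if one instead wants $S[x]$ to be a prime subcomplex of $C[x]$ over the ring $R[x]$ itself, the right move is a direct leading-coefficient argument showing that $(C_i/S_i)[x]$ is torsion-free over $(R/P_i)[x]=R[x]/P_i[x]$. Namely, $P_i=(S_i:C_i)$ is a prime ideal (as $S_i$ is a prime submodule), so $R/P_i$ is a domain and $C_i/S_i$ is torsion-free over it by Theorem \ref{equi}; if $fv=0$ with $f=\sum a_j x^j\neq 0$ in $R[x]/P_i[x]$ and $v=\sum v_k x^k\neq 0$ in $(C_i/S_i)[x]$, comparing the top coefficients gives $a_n v_m=0$ in $C_i/S_i$ with $a_n\neq 0$, which forces $v_m=0$, a contradiction. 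Either route closes the argument, and hence the corollary.
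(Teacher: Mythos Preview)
Your proposal is correct and follows exactly the paper's route: identify $S[x]$ with $R[x]\otimes S$, observe that $R[x]$ is faithfully flat over $R$, and invoke Theorem~\ref{ff}. The paper's proof is the one-line version of this, and your additional remarks on properness and on the base-ring ambiguity ($R$ versus $R[x]$) are careful elaborations that the paper omits.
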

\begin{proof}
	The proof follows from the fact that $S[x]=S \otimes R[x]$ and Theorem \ref{ff}.
\end{proof}

We now generalize the prime avoidance theorem to prime subcomplexes.

\begin{theorem}\label{pri}
	Let $T_1, \cdots , T_n$ be subcomplexes of an $R-$complex $C$ and let $S$ be a prime subcomplex of $C$ with $T_1 \cap T_2 \cap  \cdots \cap T_n \subseteq S$. Then there exists an $i \in \{1, \cdots , n\}$ such that either $T_i \subseteq S$ or $(T_i :C) \subseteq (S:C)$.
\end{theorem}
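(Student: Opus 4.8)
The plan is to imitate the classical prime-avoidance argument for ideals, carried out componentwise and tied together through the submodule-level primeness of $S$. Two preliminary observations drive everything. First, for each $j\in P_I$ the module $S_j$ is a prime submodule of $C_j$, hence $(S_j:C_j)$ is a prime ideal of $R$, and $(T_1)_j\cap\cdots\cap(T_n)_j\subseteq S_j$. Second, if $r\in\bigcap_{i=1}^n (T_i:C)$ then $rC_j\subseteq\bigcap_i (T_i)_j=(T_1\cap\cdots\cap T_n)_j\subseteq S_j$ for every $j\in I$, so that $\bigcap_{i=1}^n (T_i:C)\subseteq (S:C)=\bigcap_{j\in P_I}(S_j:C_j)$. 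The case $n=1$ is immediate from the hypothesis, so I would assume $n\ge 2$.

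First I would argue by contradiction, assuming that for every $i\in\{1,\dots,n\}$ we have $T_i\not\subseteq S$ and $(T_i:C)\not\subseteq (S:C)$. From $T_i\not\subseteq S$ choose an index $j(i)\in P_I$ and an element $m_i\in (T_i)_{j(i)}\setminus S_{j(i)}$. From $(T_i:C)\not\subseteq\bigcap_{k\in P_I}(S_k:C_k)$ choose an index $k(i)\in P_I$ and an element $r_i\in (T_i:C)\setminus (S_{k(i)}:C_{k(i)})$, noting that $r_iC_\ell\subseteq (T_i)_\ell$ for every $\ell\in I$. Then, for each fixed $i$, I would form the prime-avoidance element $x_i:=\big(\prod_{\ell\neq i}r_\ell\big)m_i$. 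Since $m_i\in (T_i)_{j(i)}$ this lies in $(T_i)_{j(i)}$, and since each $r_\ell$ with $\ell\neq i$ sends $C_{j(i)}$ into $(T_\ell)_{j(i)}$ it lies in $(T_\ell)_{j(i)}$ as well, whence $x_i\in (T_1\cap\cdots\cap T_n)_{j(i)}\subseteq S_{j(i)}$. Primeness of the submodule $S_{j(i)}$ together with $m_i\notin S_{j(i)}$ forces $\prod_{\ell\neq i}r_\ell\in (S_{j(i)}:C_{j(i)})$, and since that ideal is prime and the product is non-empty (here $n\ge 2$ is used), some $r_\ell$ with $\ell\neq i$ lies in $(S_{j(i)}:C_{j(i)})$. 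The remaining task is to turn these memberships, obtained for all $i$, into an outright contradiction with the escape conditions $r_\ell\notin (S_{k(\ell)}:C_{k(\ell)})$.

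The step I expect to be the real obstacle is exactly this last one, the index bookkeeping. Componentwise, the classical prime avoidance for prime submodules already yields, for each $j\in P_I$, an index $i=i(j)$ with $(T_i)_j\subseteq S_j$ or $((T_i)_j:C_j)\subseteq (S_j:C_j)$; the genuine content of the theorem is that a single index $i$ can be chosen uniformly in $j$. But the element $x_i$ is manufactured inside one component $C_{j(i)}$, so primeness only places some $r_\ell$ in the specific ideal $(S_{j(i)}:C_{j(i)})$, whereas the escape property of $r_\ell$ was set up with respect to the a priori unrelated ideal $(S_{k(\ell)}:C_{k(\ell)})$; in the one-module situation no such mismatch can occur since there is exactly one prime submodule and one prime ideal in play. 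To close the gap I would try either to arrange that the witnessing index $j(i)$ for $m_i$ and the escaping index $k(i)$ for $r_i$ can be taken to coincide for each $i$, or to restrict attention to the finite sub-family of $P_I$ genuinely relevant to the given $T_1,\dots,T_n$ and $S$ and induct on its size in the manner of the usual prime-avoidance induction. I would budget most of the proof for making this coordination watertight.
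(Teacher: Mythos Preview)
Your suspicion about the ``index bookkeeping'' is exactly right, and in fact the obstacle cannot be removed: the theorem as stated is false.

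Take $R=\mathbb{Z}$ and the two-term complex $C$ with $C_1=C_2=\mathbb{Z}$ and zero differentials. Let $S$ have components $S_1=2\mathbb{Z}$, $S_2=3\mathbb{Z}$; each $S_j$ is a prime submodule of $C_j$, so $S$ is a prime subcomplex, and $(S:C)=2\mathbb{Z}\cap 3\mathbb{Z}=6\mathbb{Z}$. Let $T_1$ have components $(2\mathbb{Z},\mathbb{Z})$ and $T_2$ have components $(\mathbb{Z},3\mathbb{Z})$. Then $T_1\cap T_2=(2\mathbb{Z},3\mathbb{Z})=S\subseteq S$, yet $T_1\not\subseteq S$ (fails at $j=2$), $T_2\not\subseteq S$ (fails at $j=1$), $(T_1:C)=2\mathbb{Z}\not\subseteq 6\mathbb{Z}$, and $(T_2:C)=3\mathbb{Z}\not\subseteq 6\mathbb{Z}$. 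No index $i\in\{1,2\}$ satisfies the conclusion.

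The paper's own argument commits precisely the leap you flagged. It fixes a single component $j\in P_I$ witnessing $(T_1)_j\not\subseteq S_j$ and then asserts the existence of $r_i\in((T_i)_j:C_j)\setminus(S_j:C_j)$ for every $i$, \emph{at that same $j$}. But negating the conclusion only yields, for each $i$, an element $r_i\in(T_i:C)$ lying outside $(S_{k}:C_{k})$ for \emph{some} $k$ depending on $i$; nothing forces $k=j$. In the counterexample above, the only component where $(T_1)_j\not\subseteq S_j$ is $j=2$, and there $((T_2)_2:C_2)=3\mathbb{Z}=(S_2:C_2)$, so the required $r_2$ simply does not exist.

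Consequently your proposed remedies (forcing $j(i)=k(i)$, or inducting over a finite relevant subset of $P_I$) cannot succeed in general. The argument \emph{does} go through under an extra hypothesis such as ``the prime ideals $(S_j:C_j)$ coincide for all $j\in P_I$'' (equivalently, $S$ is $P$-prime for a single prime $P$), since then membership in one $(S_j:C_j)$ is membership in all of them and your mismatch disappears; that is the natural salvage of the statement.
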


\begin{proof}
	Assume by contrary that $T_1 \nsubseteq S$ and $((T_i)_j:C_j) \nsubseteq (S_j:C_j)$ for some $j \in P_I$. So, there exist $m \in (T_1)_j - S_j$ and $r_i \in ((T_i)_j:C_j) - (S_j:C_j)$ for all $i=1, \cdots , n$. Thus $r_i m \in (T_1)_j \cap (T_i)_j$ for $i \in \{2, \cdots , n\}$, so $r_1 r_2 \cdots r_n m \in (T_1)_j \cap (T_2)_j \cap \cdots \cap (T_n)_j= (T_1 \cap \cdots \cap T_n)_j \subseteq S_j$. The primeness of $S$ gives us $m \in S_j$ or $r_2 r_3 \cdots r_n \in (S_j: C_j)$ which contradicts our assumption.
\end{proof}

At the end of the paper I want to mention that there are a lot of open questions on such subject. For example, one can do research on how to generalize the supplemented modules and its generalizations into the category of $R-$complexes.

\vspace{.3cm}

\textbf{ The author declare that he has no conflict of interest.}

\vspace{.3cm}

\textbf{This manuscript does not include any data.}
\vskip 0.4 true cm

\end{document}